\newtheorem{theorem}{Theorem}[section]
\newtheorem{corollary}[theorem]{Corollary}
\newtheorem{lemma}[theorem]{Lemma}
\newtheorem{conjecture}[theorem]{Conjecture}
\newtheorem*{claim}{Claim}
\theoremstyle{remark}
\theoremstyle{definition}
\newtheorem{definition}[theorem]{Definition}
\newtheorem{proposition}[theorem]{Proposition}
\theoremstyle{remark}
\begin{document}

\author{Mantas Baksys, Xuanang Chen}

\title  {On number of different sized induced subgraphs of Bipartite-Ramsey graphs}
\date{}
\maketitle

\begin{abstract}
    In this paper, we investigate the set of sizes of induced subgraphs of bipartite graphs. We introduce the definition of $C$-$Bipartite$-$Ramsey$ graphs, which is closely related to $Ramsey$ graphs and prove that in `most' cases, these graphs have multiplication tables of  $\Omega(e(G))$ in size. We apply our result to give direct evidence to the conjecture that the complete bipartite graph $K_{n,n}$ is the minimiser of the multiplication table on $n^2$ edges raised by Narayanan, Sahasrabudhe and Tomon.
\end{abstract}
\bigbreak

\setcounter{tocdepth}{1}
\tableofcontents
\newpage
\section{Introduction}

In 1960, Erdős introduced the multiplication table problem, which asks about the asymptotic order of the size of the set $M(n)= \{x\in \mathbb{N}:x=ab,0\leq a,b\leq n, a,b\in \mathbb{N}\}$. Intuitively, one might conjecture that $M(n) = \Omega (n^2)$. However, using the Hardy-Ramanujan theorem (for the exact statement and proof see \cite{l11}), Erdős in \cite{l12} showed that $M(n)=o(n^2)$. Much later, in 2008, the asymptotic order of magnitude of $M(n)$ was settled by Kevin Ford in \cite{l3} using number theoretic techniques. Specifically, we have by [Corollary 3, \cite{l3}] that $M(n) = \Theta\left({n^2}/{(\log n)^{\delta}(\log\log n)^{3/2}}\right)$, where $\delta = 1-(1 + \log \log 2)/ \log{2} \approx 0.086$.

The following generalisation of the multiplication table problem is due to Narayanan, Sahasrabudhe and Tomon in \cite{l1}. If for any simple graph $G$ we define $\mathcal{M}(G)=\{e(H):H$ is an induced subgraph of $G\}$, then we have $\mathcal {M}(K_{n,n}) = M(n)$, which creates a connection between the graph-theoretic nature of $\mathcal {M}(G)$ and number-theoretic nature of $M(n)$. Now, write $e(G)$ for the number of vertices in a graph $G$,  and $\Phi(G)=|\mathcal{M}(G)|$. It is natural to ask which bipartite graphs $G$ are extremal in terms of the behaviour of $\Phi(G)$. One way to concretely raise this question is by fixing the number of edges in the bipartite graphs under consideration, which leads to the following conjecture, which is formulated in \cite{l1}:
\begin{conjecture} \label{Conjecture 1.1}
Let $n\in \mathbb{N}$ and $G$ be a bipartite graph with $e(G) = n^2$.
Then $\Phi(G) \geq \Phi(K_{n,n})$.

\end{conjecture}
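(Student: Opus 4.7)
The plan is to prove Conjecture \ref{Conjecture 1.1} via a structural dichotomy that reduces everything to the $C$-Bipartite-Ramsey regime handled by the main theorem of this paper and a ``near-extremal'' regime where $G$ is close to a complete bipartite graph.

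\medskip

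\noindent\emph{Step 1 (Dichotomy).} Fix an appropriately chosen constant $C$ (depending very mildly on $n$, say $C = C_0 \log n$). I would show that every bipartite graph $G$ with $e(G) = n^2$ falls into exactly one of two cases: either $G$ is $C$-Bipartite-Ramsey, or $G$ contains a ``structured'' piece, namely an induced $K_{a,b}$ with $ab$ comparable to $n^2$ and with both $a,b$ comparable to $n$ (up to polylog factors). This is where the Bipartite-Ramsey definition is tuned to match the regime in which our main theorem gives a useful bound.

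\medskip

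\noindent\emph{Step 2 (Ramsey case).} If $G$ is $C$-Bipartite-Ramsey, the main theorem of this paper gives $\Phi(G) = \Omega(e(G)) = \Omega(n^2)$. Since Ford's theorem (Corollary 3 of \cite{l3}) yields
\[
\Phi(K_{n,n}) = |M(n)| = \Theta\!\left(\frac{n^2}{(\log n)^{\delta}(\log\log n)^{3/2}}\right),
\]
which is $o(n^2)$, we obtain $\Phi(G) \geq \Phi(K_{n,n})$ for all sufficiently large $n$.

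\medskip

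\noindent\emph{Step 3 (Structured case).} In the remaining case, $G$ contains an induced $K_{a,b}$ with $a,b$ close to $n$, so $\mathcal{M}(G) \supseteq \mathcal{M}(K_{a,b}) = \{ij : 0 \le i \le a,\ 0 \le j \le b\}$. I would split this further. If $a,b \geq n$, then $\mathcal{M}(K_{a,b}) \supseteq M(n)$ directly and we are done. Otherwise $a, b$ are close to $n$ from below; here the ``deficit'' $|M(n)| - |\mathcal{M}(K_{a,b})|$ must be compensated using the $n^2 - ab$ extra edges of $G$ lying outside the induced $K_{a,b}$. These extra edges produce induced subgraphs whose edge counts take the form $ij + k$ with $k$ ranging over a controllable set, and the idea is to show this new set of achievable values covers $M(n) \setminus \mathcal{M}(K_{a,b})$.

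\medskip

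\noindent\emph{Main obstacle.} The hardest part is Step 3, especially when $G$ is itself a complete bipartite graph $K_{a,b}$ with $ab = n^2$ and $a \ne b$, so that no ``extra edges'' are available. Here the conjecture reduces to the purely number-theoretic statement that $|\{ij : 0 \le i \le a,\ 0 \le j \le b\}| \geq |M(n)|$ whenever $ab = n^2$. Proving this requires refining Ford's techniques to the rectangular multiplication table and understanding precisely how $|M|$ depends on the aspect ratio of the box, which is the principal technical difficulty I expect; this is likely why the present paper only obtains ``direct evidence'' rather than a full proof.
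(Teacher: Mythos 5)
The statement you are addressing is left as an open conjecture in the paper --- the authors only prove the much weaker Theorem \ref{theorem 1.4} (the $C$-Bipartite-Ramsey case with both parts of polynomial size) and explicitly present it as ``evidence'' --- so there is no proof in the paper to measure your proposal against; it must stand on its own, and it does not. The central gap is Step 1: failing to be $C$-Bipartite-Ramsey only yields an induced $K_{a,b}$ \emph{or} $\overline{K_{a,b}}$ with $a,b$ of order $\log|X|,\log|Y|$, i.e.\ a logarithmic-sized homogeneous piece. There is no justification (and no truth in general) to the claim that such a graph contains an induced complete bipartite graph with $ab$ comparable to $n^2$ and $a,b$ comparable to $n$; a graph can contain a single induced $K_{\log n,\log n}$ and otherwise be pseudorandom, and you also silently drop the $\overline{K_{a,b}}$ alternative, which gives no ``structured piece'' at all. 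Step 2 also misuses Theorem \ref{theorem 1.4}: that theorem requires a \emph{fixed} constant $C$ and both parts of size at least $m^{\alpha}$, so it does not apply with your choice $C=C_0\log n$, nor to graphs with $e(G)=n^2$ whose parts are very unbalanced (e.g.\ one side of polylogarithmic size), a regime the paper explicitly cannot handle (see the discussion after Proposition \ref{proposition 4.4}). Moreover, the comparison $\Phi(G)=\Omega(n^2)$ versus $\Phi(K_{n,n})=o(n^2)$ only yields the inequality for all sufficiently large $n$ with unspecified constants, whereas the conjecture is an exact statement for every $n$.

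Step 3 is not an argument but a restatement of the open problem. Its hardest case, $G=K_{a,b}$ with $ab=n^2$ and $a\neq b$, is exactly the question of whether the rectangular multiplication table $\{ij: 0\le i\le a,\ 0\le j\le b\}$ is at least as large as the square one of the same area; Ford's asymptotics (as used in the introduction, $\Phi(K_{d,m/d})=\Theta\bigl(m/(\log d)^{\delta}(\log\log d)^{3/2}\bigr)$) are consistent with this but give only orders of magnitude, not the exact inequality. The paper's conclusion (Conjecture \ref{conjecture}) makes the depth of this explicit: even comparing $K_{n^2,(n+1)^2}$ with $K_{n(n+1),n(n+1)}$ appears to require primes in intervals shorter than Legendre's conjecture guarantees. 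Likewise, the ``extra edges'' compensation idea for near-complete $G$ is unsupported: the additional edges need not produce the specific missing values of $M(n)$, and you give no mechanism for controlling the achievable increments $k$. As it stands, the proposal reduces the conjecture to statements that are either false (Step 1), outside the scope of the proved theorem (Step 2), or at least as hard as the conjecture itself (Step 3).
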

In this note, we partially prove the above conjecture for $G$ being a $C$-$Bipartite$-$Ramsey$ graph, which we define as follows:
\begin{definition} \label{definition 1.2}
For a bipartite graph $G(X,Y)$, $G$ is $C$-$Bipartite$-$Ramsey$ if it does not contain $K_{a,b}$ or $\overline{K_{a,b}}$ as an induced subgraph for any $a\geq C\log(|X|), b\geq C\log(|Y|)$.
\end{definition}

The idea to come up with this definition stems from the definition of a $C$-$Ramsey$ graph, which states that a simple graph $G$ on $n$ vertices is $C$-$Ramsey$ if it does not contain $K_m$ or $\overline {K_m}$ as an induced subgraph for $m \geq C\log n$. This set of graphs is well-studied in the literature and is well-known to satisfy certain quasi-randomness properties. Recently, Kwan and Sudakov set out to prove in \cite{l2} the following Theorem:

\begin {theorem} \label{theorem 1.3}
For any fixed $C>0$, and any $n$-vertex $C$-$Ramsey$ graph $G$, we have $\Phi(G) = \Omega (n^2)$.
\end {theorem}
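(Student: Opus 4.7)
My approach would be to combine the strong quasi-randomness of $C$-Ramsey graphs with an anti-concentration / Littlewood-Offord style argument on a carefully chosen family of induced subgraphs. The first step is to invoke the classical structural theorems for $C$-Ramsey graphs (Prömel-Rödl, Erdős-Szemerédi, Bukh-Sudakov and others), which say that such graphs have many vertices with distinct neighbourhoods and that large subsets look pseudorandom. Concretely, after possibly discarding $o(n)$ vertices, I would aim to find a partition $V(G) = B \sqcup A$ with $|A|, |B| = \Theta(n)$ such that the codegree sequence $d(v) := |N(v) \cap B|$, $v \in A$, is well-spread: ideally containing $\Omega(|A|)$ distinct values, with at most $o(|A|)$ vertices sharing any given codegree. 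This ``diversity'' is the quantitative manifestation of the absence of large induced cliques or independent sets.

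With such $A$ and $B$ in hand, the plan is to sweep through the family $\{G[B \cup T] : T \subseteq A\}$ of induced subgraphs and count distinct edge counts. Expanding,
\[
e(G[B \cup T]) = e(G[B]) + \sum_{v \in T} d(v) + e(G[T]),
\]
the first term is a constant, the middle term is linear in the indicator vector of $T$, and the last term is quadratic in it. Restricting attention to $t$-subsets of $A$ for a carefully chosen $t \asymp |A|/2$, a classical ``sliding'' argument on distinct integers shows that the subset sums $\sum_{v \in T} d(v)$ already realize at least $t(|A|-t)+1 = \Omega(n^2)$ distinct values as $T$ ranges over $t$-subsets. In an ideal world where $e(G[T])$ depends only on $|T|$, this would immediately deliver $\Omega(n^2)$ distinct induced edge counts.

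The main obstacle, which I expect to be the technical heart of the argument, is controlling the quadratic term $e(G[T])$ so that its variability does not cancel the spread of the linear part. I would attempt this in one of two ways. First, one could try to further refine $A$ so that $G[A]$ is ``rigid'' enough to make $e(G[T])$ nearly a function of $|T|$ alone; this has to be quantitative rather than exact, because the Ramsey condition forbids large induced cliques or independent sets, so one cannot hope to make $G[A]$ empty or complete. Second, and more realistically, use a double-counting / second-moment argument: bound from above the number of pairs $(T,T')$ of equal-size $t$-subsets producing the same value of $e(G[B \cup T])$ via a variance calculation on $e(G[T])$, and divide into the lower bound $\Omega(n^2)$ on the range of the linear part coming from Littlewood-Offord. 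Executing this requires pseudorandomness of $G[A]$ beyond mere distinctness of codegrees — typically a discrepancy or codegree-uniformity estimate on $G[A]$ — which is precisely the kind of property that the structural theory of $C$-Ramsey graphs provides.

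Finally, I would aggregate: showing that for each of $\Omega(n)$ choices of $t$ one obtains $\Omega(n)$ distinct edge counts on $t$-subsets, with the different sizes $t$ contributing to essentially disjoint density windows (since a typical induced subgraph of size $t$ in a pseudorandom graph of edge density $p$ contributes roughly $p\binom{t}{2}$ edges), produces $\Omega(n^2)$ distinct values in $\mathcal{M}(G)$ overall.
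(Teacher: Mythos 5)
Your plan is not the paper's route (the paper cites Kwan--Sudakov for this theorem and adapts their window-by-window construction for its bipartite analogue), and as written it has two genuine gaps, both at the places you yourself flag as the heart of the matter. First, the input you assume in step one --- a bipartition $V=B\sqcup A$ with $|A|=\Theta(n)$ whose codegree sequence $d(v)=|N(v)\cap B|$ takes $\Omega(|A|)$ distinct values with small multiplicities --- is not supplied by the structural theory of $C$-Ramsey graphs. The available ``diversity'' statements (Erd\H{o}s--Szemer\'edi density, Kwan--Sudakov richness/diversity) are about neighbourhoods, i.e.\ symmetric differences $|N(u)\triangle N(v)|$, not about degrees; converting neighbourhood-diversity into degree-diversity requires an anti-concentration step (degrees into a random set $B$), and the collision probability $\Theta(1/\sqrt{n})$ for two diverse vertices only yields, via a Tur\'an-type argument, a set of roughly $\sqrt{n}$ vertices with pairwise distinct degrees --- not $\Omega(n)$. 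Guaranteeing $\Omega(n)$ distinct (co)degrees is essentially the Bukh--Sudakov distinct-degrees problem, far beyond what can simply be invoked, so the sliding/Littlewood--Offord count $t(|A|-t)+1=\Omega(n^2)$ for the linear part never gets off the ground.

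Second, even granting spread codegrees, the quadratic term $e(G[T])$ is not controlled by the proposal: making $G[A]$ ``rigid'' so that $e(G[T])$ is nearly a function of $|T|$ is exactly what the Ramsey hypothesis forbids, and the suggested second-moment/double count of pairs $(T,T')$ with $e(G[B\cup T])=e(G[B\cup T'])$ is itself a strong anti-concentration statement for induced edge counts of Ramsey graphs at additive scale $O(1)$ --- this is the difficult engine behind the Erd\H{o}s--McKay-type results, not a routine variance computation. The actual proof avoids both problems: fix a target $l$, choose $U$ randomly so that $e(U)$ lands within $O(n^{3/2})$ of $l$, and perturb only by vertices whose degrees into $U$ all lie within $O(\sqrt{n})$ of a common value $d$, organised as $S\cup T$ (nearly equal degrees but with a gap of order $\sqrt{n}$ between the two groups) plus a small set $Z$ of merely $\Omega(\sqrt{n})$ pairwise distinct degrees; because every perturbing vertex has essentially the same degree into $U$, the problematic quadratic interactions never need to be tamed globally, and one gets $\Omega(n^{3/2})$ distinct sizes inside each window of length $O(n^{3/2})$. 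Aggregating over $\Omega(\sqrt{n})$ disjoint windows (using $e(G)=\Omega(n^2)$, i.e.\ density bounded away from $0$ and $1$) gives $\Omega(n^2)$. If you want to salvage your approach, you would need either the distinct-degrees conjecture or a genuinely new anti-concentration lemma for $e(G[T])$; the windowed construction is precisely how the known proof trades those global requirements for local ones of size $\sqrt{n}$.
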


Taking inspiration from Theorem \ref{theorem 1.3}, we make the following Conjecture:
\begin{conjecture} \label{conjecture 1.4}
For any fixed $C>0$, all $C$-$Bipartite$-$Ramsey$ graphs $G(X,Y)$ with $|X||Y|=m$, \\we have $\Phi (G) = \Omega (m)$.
\end{conjecture}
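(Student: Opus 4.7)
The plan is to adapt the strategy of Kwan and Sudakov underlying Theorem \ref{theorem 1.3} to the bipartite setting. First, I would use the Bipartite-Ramsey hypothesis to extract quasi-random structure on $G$: specifically, I would look for large \emph{core} subsets $A\subseteq X$ and $B\subseteq Y$ with $|A|=\Theta(|X|)$, $|B|=\Theta(|Y|)$, such that every vertex in $A$ has degree to $B$ bounded away from both $0$ and $|B|$ (and symmetrically for $B$), and such that pairs of vertices on the same side have codegrees bounded away from the extremes. The absence of an induced $K_{a,b}$ or $\overline{K_{a,b}}$ with $a\geq C\log|X|$, $b\geq C\log|Y|$ should be exactly what prevents large clusters of vertices with nearly identical or nearly extremal neighbourhoods.

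Next, I would produce many distinct edge counts by a two-sided vertex perturbation. Fix a base induced subgraph $H_0=G[X_0,Y_0]$ with $X_0\subseteq A$, $Y_0\subseteq B$. For any $S\subseteq A\setminus X_0$ and $T\subseteq B\setminus Y_0$, the induced subgraph on $(X_0\cup S,\,Y_0\cup T)$ has
\begin{equation*}
e = e(H_0) + \sum_{x\in S} d_{Y_0}(x) + \sum_{y\in T} d_{X_0}(y) + e(G[S,T]).
\end{equation*}
The spread of the degree sequence $\{d_{Y_0}(x)\}_{x\in A\setminus X_0}$, inherited from the quasi-randomness of the previous step, combined with a Littlewood--Offord-type anti-concentration inequality, should produce $\Omega(|X|)$ distinct values of the $S$-sum as $S$ varies, and symmetrically $\Omega(|Y|)$ distinct values of the $T$-sum as $T$ varies.

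The hard part, I expect, will be combining these into $\Omega(|X||Y|)$ distinct edge counts rather than merely $\Omega(|X|+|Y|)$. The bilinear term $e(G[S,T])$ couples the two perturbations, so contributions do not simply add. My strategy would be to isolate a family of $\Omega(|Y|)$ sets $T$ whose $T$-sums are mutually separated by gaps exceeding the entire $S$-sum range, so that each such $T$ carves out a disjoint interval of edge counts in which the $\Omega(|X|)$ distinct $S$-sums then land. Uniformly controlling the coupling term $e(G[S,T])$ across this family seems to require either a robust codegree regularity consequence of Bipartite-Ramseyness, or a careful averaging / random-restriction argument to absorb the interference. I suspect this last step is where the conjecture becomes genuinely delicate — the partial proof in this paper presumably succeeds only in those `most' configurations where such uniform coupling control is available, and full generality would require a bipartite analogue of the deeper structural results about Ramsey graphs (in the spirit of Erd\H{o}s--McKay).
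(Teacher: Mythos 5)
First, note that the statement you are proving is Conjecture \ref{conjecture 1.4}, which remains open: the paper itself only establishes the weaker Theorem \ref{theorem 1.4}, under the extra hypothesis $|X|,|Y|\geq m^{\alpha}$. Your proposal is likewise not a proof; it is a plan whose decisive step you yourself flag as unresolved, namely uniform control of the bilinear coupling term $e(G[S,T])$ in a two-sided perturbation. That gap is genuine and, as written, fatal: to reach $\Omega(m)$ distinct edge counts inside a range of length at most $e(G)\leq m$ you would need the $S$-sums to occupy $\Omega(|X|)$ distinct values inside windows of width comparable to the $T$-gaps, while the interference $e(G[S,T])$ fluctuates on the scale of $|S|\,|T|\,d(G)$, which in general swamps those gaps; no Littlewood--Offord-type anti-concentration statement by itself separates values, it only prevents excessive collision, and it does not touch the coupling term at all. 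You also pass over the first real obstacle: even your ``core subsets with degrees bounded away from the extremes'' step is nontrivial for unbalanced parts --- this is exactly the density statement (Proposition \ref{proposition 4.4}), which the paper can only prove when $|X|,|Y|\geq m^{\alpha}$ or when $|X|$ is strongly subpolynomial, and this is precisely why the full conjecture is out of reach of the paper's methods.

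The paper's (partial) proof avoids your hard step entirely rather than solving it: in Lemmas \ref{lemma 4.8} and \ref{lemma 4.9} the perturbation sets $S$, $T$, $Z$ are taken \emph{only inside $Y$} (as vertices or disjoint pairs), so since $G$ is bipartite there are no edges among the added vertices and the coupling term is identically zero. The count $\Omega(m)$ is then reached not by a product of two one-sided counts but by a multi-scale construction: a random $U$ with $e(U)$ concentrated near a target $l$ (Chebyshev), coarse increments $\Theta(f(m))$ from adding $S$-vertices, medium increments $\Theta(\sqrt{f(m)})$ from exchanging $S$-vertices for $T$-vertices whose degrees into $U$ are separated by $\Omega(\sqrt{f(m)})$, fine increments from single vertices of $Z$ with pairwise distinct degrees into $U$, and finally $\Omega(\sqrt{f(m)})$ choices of the target $l$. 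The degree separation and distinctness are extracted from bipartite richness and diversity (Lemmas \ref{lemma 4.6} and \ref{lemma 4.7}), which play the role you assign to anti-concentration. If you want to pursue your two-sided scheme, you would be attempting something strictly harder than what the paper does, and you would in addition need the density result in the regime $\exp\{\sqrt{\log m}\}\ll |X|\ll m^{\alpha}$, which the paper explicitly leaves open.
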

Using similar methods as Kwan and Sudakov, however having to do more work, when the sizes of vertex sets $X$ and $Y$ become unbalanced, we will prove the following weaker version of  Conjecture \ref{conjecture 1.4}:
\begin{theorem} \label{theorem 1.4}
 For any $C$-$Bipartite$-$Ramsey$ graph $G(X,Y)$, if there exists a constant $\alpha>0$ such that $|X||Y|=m$ and $|X|,|Y| \geq m^{\alpha}$, where $|X|$ and $|Y|$ are dependent on $m$, then $\Phi(G) = \Omega (m)$.
\end{theorem}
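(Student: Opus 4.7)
The plan is to adapt the Kwan--Sudakov approach behind Theorem~\ref{theorem 1.3} to the bipartite setting. Write $a=|X|$ and $b=|Y|$, so $ab=m$ and $a,b\geq m^{\alpha}$; consequently both $\log a$ and $\log b$ are $\Theta(\log m)$, and the Ramsey thresholds $C\log a$, $C\log b$ are of comparable order. The overarching strategy is to construct a sequence of induced subgraphs $H_{0},H_{1},\ldots,H_{L}$ of $G$ whose edge counts satisfy $e(H_{i+1})-e(H_{i})\leq c$ for some constant $c=c(C,\alpha)$, while $e(H_{L})-e(H_{0})=\Omega(m)$. Any such sequence immediately yields $\Phi(G)\geq L=\Omega(m)$.

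The first main step is a structural lemma: in any $C$-Bipartite-Ramsey graph, for every pair of sufficiently large $A\subseteq X$, $B\subseteq Y$ (of sizes at least a suitable multiple of $\log m$) one can find $u\in A$ and $v\in X\setminus A$ whose degrees $|N(u)\cap B|$ and $|N(v)\cap B|$ differ by a nonzero amount that is at most $c$. The proof is by contradiction: if no such pair exists, the degree sequence into $B$ is rigidly constrained, and a pigeonhole argument on the precise neighborhoods in $B$ produces either an induced $K_{s,t}$ or $\overline{K_{s,t}}$ that violates the Ramsey hypothesis. A symmetric lemma holds with the roles of $X$ and $Y$ exchanged.

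Given these lemmas, I would build the sequence by alternating \emph{fine moves} (swapping a vertex $u\in A$ for some guaranteed $v\in X\setminus A$, which changes the edge count by at most $c$) with occasional \emph{coarse moves} (adding or removing a single vertex, or transitioning between carefully chosen base configurations). Every coarse jump of size greater than $c$ is immediately followed by enough fine swaps to fill in the intermediate integer values, so that the set of realised edge counts has consecutive gaps at most $c$ throughout a range of length $\Omega(m)$.

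The principal obstacle is the unbalanced case $a=m^{\alpha}$, $b=m^{1-\alpha}$ (or its mirror). The smaller side alone yields at most $O(m^{\alpha})$ fine moves, which is insufficient to span an interval of length $\Omega(m)$. One must therefore also perform fine moves on the larger side $Y$ and let the two cooperate: pick an initial $A,B$ so that both structural lemmas apply, then interleave $X$-swaps with $Y$-swaps to cover the full range. Keeping track of the induced subgraph across interleaved moves while ensuring both Ramsey thresholds simultaneously hold throughout the process is the extra technical work alluded to in the introduction, and it is precisely here that the assumption $|X|,|Y|\geq m^{\alpha}$ is used --- it guarantees that neither side is too small for the corresponding Ramsey mixing to be meaningful.
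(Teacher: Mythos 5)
Your overall strategy is not the paper's, and as it stands it has a genuine gap at its core. The ``structural lemma'' you rely on --- that for every large $A\subseteq X$, $B\subseteq Y$ there are $u\in A$, $v\in X\setminus A$ whose degrees into $B$ differ by a nonzero amount bounded by a constant $c=c(C,\alpha)$ --- is not established by the sketch you give, and it is not a plausible consequence of the $C$-Bipartite-Ramsey hypothesis by pigeonhole. Equal or rigidly spaced degrees into a fixed $B$ do not force equal (or even similar) neighbourhoods, and a pigeonhole on exact neighbourhoods inside $B$ only bites when $|X|$ exceeds the number of candidate neighbourhoods, which is exponential in $|B|$; but the Ramsey condition only forbids homogeneous pairs with \emph{both} sides of logarithmic size, so by the time $|B|\geq C\log|Y|$ the pigeonhole fails and no induced $K_{s,t}$ or $\overline{K_{s,t}}$ of the forbidden size is produced. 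The paper's quasi-randomness statements (the richness Lemma \ref{lemma 4.7}, proved via Lemma \ref{lemma 4.5} and the bipartite Ramsey bound $b(p,q)\leq\binom{p+q}{p}$, and the diversity consequences in Lemma \ref{lemma 4.6}) are of a genuinely different and weaker nature: they bound the number of vertices or pairs with atypical neighbourhood intersections; they say nothing like ``some two degrees differ by a bounded nonzero amount.''

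Even granting that lemma, your walk $H_0,\dots,H_L$ with consecutive gaps at most $c$ over a range of length $\Omega(m)$ is a far stronger statement than Theorem \ref{theorem 1.4} --- it is essentially a bipartite Erd\H{o}s--McKay-type claim with bounded gaps --- and the sketch provides no mechanism for it: a single swap guaranteed by your lemma need not move the edge count in the desired direction, repeated swaps can cycle, a coarse move adding one vertex can jump by as much as $m/f(m)$ and you give no reason the intermediate values can be filled in, and the hypotheses on $A,B$ must be maintained as they evolve. You also silently assume a range of attainable sizes of length $\Omega(m)$, i.e.\ $e(G)=\Omega(m)$, which in the paper is Proposition \ref{proposition 4.4} and requires its own nontrivial argument. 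The paper avoids all of this by never seeking bounded gaps: it fixes a target $l$, generates a random $U$ with $|e(U)-4l|=O(m/\sqrt{f(m)})$, uses richness/diversity plus Tur\'an-type and anticoncentration arguments to build sets $S,T,Z\subseteq Y$ of well-separated and distinct degrees into $U$ (Lemma \ref{lemma 4.9}), extracts $\Omega(m/\sqrt{f(m)})$ distinct values of $e(U\cup W')$ in that window (Lemma \ref{lemma 4.8}), and sums over $\Omega(\sqrt{f(m)})$ disjoint windows to reach $\Omega(m)$. To make your route work you would need to prove the bounded-gap swapping lemma and a monotone filling scheme, neither of which is currently supported.
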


Note that $\Phi(G) = \Omega(m)$ is the best result we can get because of the trivial inequalities $\Phi(G) \leq e(G)$ and $e(G) \leq m$, which together imply that $\Phi(G) \leq m$.

Let us give a few remarks about Theorem \ref{theorem 1.4} and its connection to Conjecture \ref{Conjecture 1.1}. First, note that $\Phi(G) = \Omega (m)$ implies that $\Phi (G) = \Omega (e(G))$, since $e(G) \leq m$. Then, for any $m\in \mathbb{N}$, $d\leq \sqrt{m}$, $d|m$, we have $\Phi(K_{d,m/d})=\Theta\left({m}/{(\log d)^{\delta}(\log\log d)^{3/2}}\right)$. To see this, consider $H(x, y, z)$, the number of positive integers $n \in \mathbb{N}$ such that $n \leq x$ and there exists $d \in \mathbb {N}$ such that $d|n$ and $y < d \leq z$. First set $x=m/4,y=d/4,z=d/2$, then set $x=m/2^k,y=d/2^{k+1},z=d/2^k$ for non-negative integer $k$. The result follows by applying [Theorem 1(v), \cite{l3}] into the inequality 
\begin{align*}
    H(\frac{m}{4},\frac{d}{4},\frac{d}{2})\leq \Phi(K_{d,m/d}) \leq \sum_{k\geq 0} H(\frac{m}{2^k},\frac{d}{2^{k+1}},\frac{d}{2^k}),
\end{align*}
and noting that $u$ is defined in \cite{l3} as $y^{1+u}=z$, which means that $u=\log 2/\log y$ in our case. Hence, if for some function $f: \mathbb{N} \rightarrow \mathbb {N}$, $d=f(m)$, $f(m) \rightarrow \infty$ as $m \rightarrow \infty$, then  $\Phi(K_{d,m/d}) = o(m) = o(e(K_{d,m/d}))$. Specifically, letting $m=n^2$ and $d=n$, we arrive at the result that $\Phi(K_{n,n})=o(n^2)=o(e(K_{n,n}))$, which using the result of Theorem \ref{theorem 1.4} implies that for any $C$-$Bipartite$-$Ramsey$ graph $G$ with $e(G)=n^2$, $\Phi(G) \geq \Phi (K_{n,n})$. This, as claimed, gives evidence to the validity of Conjecture \ref{Conjecture 1.1}. 

But if we do not restrict our focus on the specific nature on Conjecture \ref{Conjecture 1.1}, then we are able to make even more concrete claims. If a given graph $G$ is $C$-$Bipartite$-$Ramsey$ and satisfies the condition in Theorem \ref {theorem 1.4}, then we can deduce that $\Phi(G)$ is almost surely not the minimiser of those graphs with the same number of edges as $G$, due to the fact that most positive integers $x$ have a divisor in the range $(\log x, \sqrt{x}]$. Note that this corresponds to being able to choose $f(x) \geq \log x$ for most $x$. This is because [Theorem 3, \cite{l6}] states that for $1 \leq y \leq z \leq x$, $H(x, y, z) = x\left(1+O\left(\log y/ \log z\right)\right)$. Now letting $y=\log x$, $z=\sqrt{x}$, we have that $O\left(\log y/\log z \right)=O\left( 2\log \log x/\log x\right) = o(1)$, which means that $H(x, \log x, \sqrt{x}) = x(1+o(1))$. Therefore the numbers $n \in \mathbb{N}$ such that $n\leq x$ and $n$ having a divisor between $\log x$ and $\sqrt {x}$ have asymptotic density $1$. From this, the claim that the set of numbers $x \in \mathbb{N}$ with a divisor between $(\log x, \sqrt{x}]$ have asymptotic density 1 is straight-forward, as desired.

The reason why we are interested in $C$-$Bipartite$-$Ramsey$ is because `most' random bipartite graphs are  $C$-$Bipartite$-$Ramsey$. To illustrate this idea, we give a specific example below. Consider a random bipartite graph $G(n,n)$ with probability $\frac{1}{2}$ of connecting an edge. It is almost surely $C$-$Bipartite$-$Ramsey$ for $C>5$ as $n\rightarrow \infty$: let $A$ be the event that $G$ has $ K_{5\log n,5\log n}$ or $\overline{K_{5\log n,5\log n}}$ as an induced subgraph, let $k = 5 \log n$, then
\[  \mathbb{P}(A)\leq 2\binom{n}{k}^2\left(\dfrac{1}{2}\right) ^{k^2}\leq \dfrac{2\left(n^{5\log n}\right)^2}{2^{25\left(\log n\right)^2}}\leq 2(\sqrt{e}/{2})^{25(\log n)^2}\rightarrow 0 \textit { as } n \rightarrow \infty,\] where we implicitly used the union bound.

The main goal of this paper will be to give a proof of Theorem \ref{theorem 1.4}. For this, we will split the remaining contents into 5 sections. In the first one, we will give the notation and basic definitions used in this paper. Then, we will move to give some preliminaries, which we will follow up by a discussion of density of $C$-$Bipartite$-$Ramsey$ graphs. After that, we will dedicate two sections to give and overview of the proof of Theorem \ref{theorem 1.4} and subsequently its proof. Lastly, we will give a conclusion to the paper in which we raise some new interesting questions and shed some light on existing ones.

\subsection{Notation and Basic Definitions}
We use standard notation for asymptotic estimates. Floor and ceiling symbols are generally omitted, as we are concerned about the asymptotic magnitude of the functions considered to which floor and ceiling symbols do not have any impact. We also use standard graph theoretic notations. Besides, we write the density of a graph $d(G)=e(G)/\binom{|G|}{2}$ and the density of the bipartite graph $G(X,Y)$ by $d(G)=e(G)/|X||Y|$, where $e(G)$ is the number of edges inside $G$, and $X,Y$ are two sides of it. Apart from Section 3, we use $|X|=n_1,|Y|=n_2$ or $|X|=f(m)\leq \sqrt{m}, |Y|=m/f(m)$, depending on which look nicer in different occasions. We write $N_U({x})=N(x)\cap U$ to be the neighbourhood of vertex $x$ into vertex set $U$, and let $d_U(x)=|N_U(x)|$. For a pair of vertices $\boldsymbol{v}=\{v_1,v_2\}$, write $d({\boldsymbol{v}})=d(v_1)+d(v_2)$ to be the size of $N(\boldsymbol{v})=N(v_1)\cup N(v_2)$, where the union here denotes the multiset union. Also, we work with the symmetric difference of two (multi)sets, defined for (multi)sets $A, B$ by $A\triangle B= (A\setminus B) \cup (B\setminus A)$. Lastly, let $hom(G)$ to be the largest homogeneous set (either independent or complete set) of graph $G$. The definition of $C$-$Ramsey$ graph is standard. Say a graph $G$ with $m$ vertices is $C$-$Ramsey$ if $hom(G)\leq C\log m$.

\section{Preliminaries}\label{section 2}

Recall our definition for $C$-$Bipartite$-$Ramsey$ graph given as in Definition \ref{definition 1.2}. For simplicity, throughout this section we will always assume that for a bipartite graph $G(X, Y)$ we have $|X| \leq |Y|$.

One important immediate observation, which shows the distinction between $C$-$Ramsey$ graph theory and $C$-$Bipartite$-$Ramsey$ theory is the fact that we will allow our vertex sets to vary in size relative to each other. As opposed to $C$-$Ramsey$ graphs, where each vertex can be connected to all of the other vertices, we place the condition of the graphs under consideration to be bipartite, which will severely restrict the degrees of vertices based on their vertex set. It is for this reason, that the balance of sizes of vertex sets $X$ and $Y$ will be important for the proof of Theorem \ref{theorem 1.4}. Specifically, one reason for this is that our proof strategy will involve constructing many induced subgraphs, whose sizes are sufficiently well-spaced in size, using probabilistic techniques. Note that each vertex $x \in X$ can potentially be connected to $\Omega (|Y|)$ other vertices in $Y$. Because of this, as the sizes of the vertex sets become more imbalanced, the variance in number of edges, while generating a random vertex set $U$ with expected number of edges $l$ will also increase. This will create additional difficulty in proving Conjecture \ref{conjecture 1.4}, making us instead prove the weaker Theorem \ref{theorem 1.4}, which nevertheless takes care of `most' sizes of vertex sets $X$ and $Y$. Another reason why the imbalance in the sizes of vertex sets complicates our proof is because it becomes harder to prove that these graphs $G(X,Y)$ have density in the interval $(\varepsilon, 1 -\varepsilon)$ for some absolute constant $\varepsilon > 0$.

Now, we will introduce some new definitions, adapted to bipartite graphs, which will help us to attain our result. Showing that $C$-$Bipartite$-$Ramsey$ graphs satisfy these definitions will be our way to quantify the quasi-randomness properties of the aforementioned set of graphs. Firstly, we will require a condition stating that `most' neighbourhoods of the bipartite graph are very different. 
\begin{definition}
For $c, \delta > 0$, a bipartite graph $G(X,Y)$ is $(c, \delta)$-bipartite-diverse if for each $x_1 \in X$, there exists at most $|X|^{\delta}$ vertices $x_2 \in X$ such that $|N(x_1)\triangle N(x_2)| < c|Y|$ and if for each $y_1 \in Y$, there exists at most $|Y|^{\delta}$ vertices $y_2 \in Y$ such that $|N(y_1)\triangle N(y_2)| < c|X|$.
\end{definition}
Secondly, we will also require a very similar condition to the one above, saying that if a pair of vertices has very different neighbourhoods, then `most' other disjoint pairs of vertices will also have very different neighbourhoods to the first pair.
\begin{definition}
 For  $\alpha, \delta, \varepsilon >0$, we call a bipartite graph $G(X,Y)$ $(\alpha, \delta, \varepsilon)_2$-bipartite-diverse if for each \\$\boldsymbol {x}=\{x_1, x_2\} \in \binom {X}{2}$ such that $|N(x_1)\triangle (\overline {N(x_2)}\cap Y)| \geq \alpha |Y|$ there exists at most $|X|^{\delta}$ other disjoint pairs $\boldsymbol{x'} \in \binom{X}{2}$ such that $|N(\boldsymbol{x})\triangle N(\boldsymbol{x'})| < \varepsilon |Y|$ and the corresponding statement holds with the roles of vertex sets $X$ and $Y$ reversed.
\end{definition}
Lastly, we will require another, slightly more general definition of richness for bipartite graphs, which will be the most important quasi-randomness property of $C$-$Bipartite$-$Ramsey$ graphs. One can think of it as a succinct condition, which will allow us to deduce other diversity results. We will do exactly that later on in Lemma \ref{lemma 4.6}.

\begin {definition}
For $\gamma, \delta, \varepsilon > 0$, a bipartite graph $G(X, Y)$ is $(\gamma, \delta, \varepsilon)$-bipartite-rich if for each $W_X \subseteq X$ such that $|W_X| \geq \gamma |X|$, there exists at most $|Y|^{\delta}$ vertices $y \in Y$ such that $|N(y)\cap W_X| < \varepsilon |X|$ or $|\overline{N(y)}\cap W_X| < \varepsilon |X|$ and the corresponding statement holds with the roles of vertex sets $X$ and $Y$ reversed.
\end{definition}

It must be noted that the definitions diversity and richness for $C$-$Bipartite$-$Ramsey$ graphs follow very closely from the definitions of diversity and richness for $C$-$Ramsey$ graphs by Kwan and Sudakov in \cite{l2}. The first key distinction is that we adjust the definitions to the bipartite setting by separating the conditions to each of the vertex sets. The second key distinction is that we introduce a third coefficient in our definition of richness, which is a new idea. It is largely because of this idea, that we are able to construct a proof of Theorem \ref{theorem 1.4}.

Throughout this paper, we will frequently use common probabilistic inequalities, namely Markov's inequality, Chebyshev's inequality and the Chernoff bound. The statements and proofs of all of these can be found in \cite{l13}.

Also, we will make frequent use of Turan's Theorem, and so we will give two different statements of it here:

\begin{theorem} [Turan's Theorem] \label{turan}
Let $G$ be a graph on $n$ vertices with average degree $d_G$, then $G$ has an independent set of size $\dfrac {n}{1+d_G}$.
\end{theorem}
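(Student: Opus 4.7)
The plan is to prove this via the probabilistic method using a uniformly random vertex ordering, which gives the stronger Caro--Wei bound and from which Theorem \ref{turan} drops out by convexity. First I would sample a uniformly random permutation $\pi$ of $V(G)$, and set $I = \{v \in V(G) : v \text{ precedes every neighbour of } v \text{ in } \pi\}$. The set $I$ is automatically independent: if $u, v \in I$ were adjacent, the later of the two in $\pi$ would have a neighbour (the other one) appearing earlier, contradicting the defining property of $I$.

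Next I would compute $\mathbb{E}[|I|]$ by linearity of expectation. For any fixed $v \in V(G)$, the event $\{v \in I\}$ depends only on the relative order of $v$ and its $d(v)$ neighbours, and by symmetry $v$ is equally likely to appear first among these $d(v)+1$ vertices. Hence $\mathbb{P}[v \in I] = \frac{1}{1+d(v)}$, and so $\mathbb{E}[|I|] = \sum_{v \in V(G)} \frac{1}{1+d(v)}$.

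To conclude, I would apply Jensen's inequality to the convex function $f(x) = \frac{1}{1+x}$ on $[0,\infty)$. Since $\frac{1}{n}\sum_v d(v) = d_G$, convexity yields $\frac{1}{n}\sum_{v}\frac{1}{1+d(v)} \geq \frac{1}{1 + d_G}$, so $\mathbb{E}[|I|] \geq \frac{n}{1 + d_G}$. There must therefore exist an outcome of $\pi$ producing an independent set of at least this size, as required. The only substantive step is the observation that $I$ is independent; everything else is linearity of expectation, a symmetry calculation, and a standard invocation of Jensen's inequality, so I do not anticipate any genuine obstacle for this classical result. (An alternative route would be a greedy argument that repeatedly removes a vertex of minimum degree and all its neighbours, but the probabilistic proof is cleaner and gives the sharper per-vertex bound.)
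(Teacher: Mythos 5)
Your argument is correct: the random-permutation (Caro--Wei) bound $\mathbb{E}|I| = \sum_v \tfrac{1}{1+d(v)}$ is valid, and Jensen's inequality applied to the convex function $x \mapsto \tfrac{1}{1+x}$ does yield an independent set of size at least $\tfrac{n}{1+d_G}$. The paper itself offers no proof of this statement --- it quotes Tur\'an's theorem as a classical result and only derives its two corollaries from it --- so there is nothing to compare against; your proof is a standard and complete justification of the bound as stated.
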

In practice, it will be hard to find out the average degree of $G$, so we will instead use the following Corollary of Theorem \ref{turan}:
\begin{corollary} \label{turan1corollary}
Let $G$ be a graph on $n$ vertices with maximum degree $\triangle(G)$, then $G$ has an independent set of size $\dfrac {n}{1+\triangle(G)}$.
\end{corollary}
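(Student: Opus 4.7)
The plan is to derive this directly from Theorem \ref{turan} by observing that the maximum degree dominates the average degree. First I would recall that for any graph $G$ on $n$ vertices, the average degree $d_G$ satisfies $d_G \leq \triangle(G)$, simply because the average of a collection of numbers cannot exceed their maximum. Consequently $1 + d_G \leq 1 + \triangle(G)$, which after inverting yields
\[
\frac{n}{1 + d_G} \geq \frac{n}{1 + \triangle(G)}.
\]

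Next, I would invoke Theorem \ref{turan} to obtain an independent set of size at least $n/(1+d_G)$ inside $G$. Combined with the inequality above, this independent set has size at least $n/(1+\triangle(G))$, which is exactly the claimed bound. There is no real obstacle here: the corollary is a one-line monotonicity argument applied to the given form of Turán's theorem, and the only subtlety worth noting is that replacing $d_G$ by $\triangle(G)$ weakens the bound (which is precisely why the corollary is stated as a weaker but more practical tool, as the paper emphasises).
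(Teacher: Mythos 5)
Your argument is correct and is exactly the paper's own proof: the paper likewise deduces the corollary from Theorem \ref{turan} by noting $\triangle(G) \geq d_G$, so the bound $n/(1+d_G)$ from the theorem is at least $n/(1+\triangle(G))$. Nothing is missing.
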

The proof of Corollary \ref{turan1corollary} follows from simply noting that $\triangle(G) \geq d_G$.

Sometimes, we will not have a good understanding of the maximum degree of the given graph $G$, but we will know the number of edges $e(G)$ within $G$, in which case we will use the second corollary of Turan's theorem:

\begin{corollary} \label{turan2corollary}
Let $G$ be a graph on $n$ vertices, then $G$ has an independent set of size at least $\dfrac{n^2}{2e(G)+n}$
\end{corollary}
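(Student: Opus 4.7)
The plan is to invoke Theorem~\ref{turan} directly, after expressing the average degree of $G$ in terms of $e(G)$. By the handshake lemma, $\sum_{v \in V(G)} d(v) = 2e(G)$, so the average degree of $G$ is $d_G = 2e(G)/n$. Plugging this into the bound supplied by Theorem~\ref{turan} yields an independent set of size at least
\[
\frac{n}{1 + d_G} \;=\; \frac{n}{1 + 2e(G)/n} \;=\; \frac{n^2}{n + 2e(G)},
\]
which is exactly the claimed bound.

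This is the natural companion to Corollary~\ref{turan1corollary}: whereas that corollary weakens Theorem~\ref{turan} by replacing the average degree with the (always at least as large) maximum degree $\triangle(G)$, here we retain the exact average degree and simply re-encode it via the edge count. In particular, the resulting estimate $\frac{n^2}{2e(G)+n}$ is never worse than $\frac{n}{1+\triangle(G)}$, and it is typically sharper in situations where $e(G)$ is known but $\triangle(G)$ is not. There is no real obstacle to overcome here; the entire argument is a one-line algebraic rearrangement of Theorem~\ref{turan} combined with the handshake identity, so I would expect the written proof to occupy only a couple of lines.
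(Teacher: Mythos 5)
Your proof is correct and matches the paper's argument: both deduce the bound from Theorem~\ref{turan} by rewriting the average degree in terms of the edge count (the paper phrases this as multiplying numerator and denominator by $n$ and using $2e(G) \geq n d_G$, which is your handshake identity). Nothing further is needed.
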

Again, Corollary \ref{turan2corollary} can be easily deduced from Theorem \ref{turan} by multiplying both the numerator and denominator by $n$ and noting that $2e(G) \geq nd_G$.

\section {Density of C-Bipartite-Ramsey graphs} \label {section 3}
In \cite{l7}, Erdős proved the density for $C$-$Ramsey$ graph $G$ is such that $d(G) \in (\varepsilon, 1-\varepsilon)$ for some $\varepsilon>0$. We will need an analogous statement for $C$-$Bipartite$-$Ramsey$ graphs for our proof of Theorem \ref{theorem 1.4}. However, for $C$-$Bipartite$-$Ramsey$ graphs, the proof of the claim becomes harder, partially because of the possible imbalance in size of the two vertex sets. We were successful in proving the following result:
\begin{proposition} \label{proposition 4.4}
For any $C$-$Bipartite$-$Ramsey$ graph $G=G(X,Y)$ with $|X||Y|=m$ and $|X|,|Y|$ dependent on $m$, $G$ has density between $(\varepsilon,1-\varepsilon)$ for some $\varepsilon=\varepsilon(C,\log m/\log |X|)>0$, if:
\begin{enumerate}
    \item $|X|,|Y|\geq m^{\alpha}$ for some  $\alpha>0$ independent of $m$, or
    \item
    $|X|=o(\exp\{\sqrt{\log m}\})$
\end{enumerate}
\end{proposition}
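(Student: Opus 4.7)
The plan is to prove the lower bound $d(G)\geq\varepsilon$; the upper bound $d(G)\leq 1-\varepsilon$ then follows by applying the same argument to the bipartite complement of $G$, which is itself $C$-$Bipartite$-$Ramsey$ with the same vertex sets. Assume without loss of generality that $|X|\leq|Y|$ and set $R=\log m/\log|X|\geq 2$, so that $\log|Y|=(R-1)\log|X|$. Supposing for contradiction that $d(G)<\varepsilon$ for some $\varepsilon$ to be chosen as a function of $C$ and $R$, I aim to construct disjoint $A\subseteq X$ and $B\subseteq Y$ with $|A|=a:=\lceil C\log|X|\rceil$ and $|B|=b:=\lceil C\log|Y|\rceil$ and no edge of $G$ between them; this exhibits an induced $\overline{K_{a,b}}$ and contradicts the hypothesis.

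The construction is by a first moment argument. By Markov's inequality applied to the $X$-degrees of vertices in $Y$, the set $Y'=\{y\in Y:d_X(y)\leq 2\varepsilon|X|\}$ has $|Y'|\geq|Y|/2$. Let $A\subseteq X$ be a uniformly random subset of size $a$; for each $y\in Y'$,
\[
\Pr[A\cap N(y)=\emptyset]=\binom{|X|-d_X(y)}{a}\,/\,\binom{|X|}{a}\geq(1-4\varepsilon)^{a},
\]
provided $a\leq|X|/2$, which holds once $|X|$ is large enough that $C\log|X|\leq|X|/2$. Writing $B=\{y\in Y':A\cap N(y)=\emptyset\}$, linearity of expectation yields $\mathbb{E}|B|\geq\tfrac{|Y|}{2}(1-4\varepsilon)^{a}$. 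Using $\log(1-4\varepsilon)\geq-8\varepsilon$ (valid for $\varepsilon\leq 1/8$) and substituting $a=C\log|X|$, the requirement $\mathbb{E}|B|\geq 2b$ simplifies, after absorbing a $\log\log|Y|$ error term into the constant, to the condition $\varepsilon\leq(R-1)/(9C)$ for $m$ sufficiently large. I would therefore set $\varepsilon=\min\!\left(1/8,\,(R-1)/(9C)\right)$, which depends only on $C$ and $R$; averaging then produces an outcome with $|B|\geq b$, yielding the forbidden $\overline{K_{a,b}}$.

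It remains to check the two cases in the statement. In case $(1)$, the lower bound $|X|\geq m^{\alpha}$ forces $R\leq 1/\alpha$, so $\varepsilon$ is an absolute positive constant depending only on $C$ and $\alpha$. In case $(2)$, $R\to\infty$, so $\varepsilon=1/8$ for all large $m$; moreover $a=C\log|X|=o(\sqrt{\log m})$ is tiny, making the binomial ratio estimate essentially sharp. The main obstacle I anticipate is the intermediate regime in which $a$ is not $o(|X|)$, since then the crude inequality $\binom{(1-2\varepsilon)|X|}{a}/\binom{|X|}{a}\geq(1-4\varepsilon)^{a}$ breaks down; this is exactly the territory lying between conditions $(1)$ and $(2)$ of the proposition. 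In case $(2)$ I would handle the delicate sub-regime $|X|\lesssim\log m$ by a pigeonhole argument rather than Markov: since there are only $2^{|X|}$ possible neighborhoods in $X$, once $|Y|/2^{|X|}\geq b$ many $y\in Y$ share a common neighborhood $N_0\subseteq X$, and taking either $N_0$ or its complement in $X$ against these $y$'s directly exhibits an induced $K_{a,b}$ or $\overline{K_{a,b}}$, again contradicting the hypothesis regardless of density.
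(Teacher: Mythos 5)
Your proposal is essentially correct, but it takes a genuinely different and much more elementary route than the paper. The paper first reduces case (1) to the balanced case and then proves the structural Lemma \ref{lemma 4.5}: a balanced bipartite graph of density at most $1/k$ contains $K_{a,a}$ or $\overline{K_{a,a}}$ with $a\gg k\log n/\log k$, established by extracting low-degree vertices, taking a maximal balanced independent pair, a pigeonhole count over small neighbourhoods, and the Hattingh--Henning bound $b(p,q)\le\binom{p+q}{p}$; case (2) is then handled by a separate chunking/pigeonhole argument over $\Theta(\sqrt{n_2}/n_1^{1/10})$ balanced pieces of $Y$. You instead aim directly at the forbidden asymmetric pattern with $a=\lceil C\log|X|\rceil$, $b=\lceil C\log|Y|\rceil$: Markov to get $|Y|/2$ vertices of $X$-degree at most $2\varepsilon|X|$, a uniformly random $a$-subset $A\subseteq X$, the hypergeometric estimate $\Pr[A\cap N(y)=\emptyset]\ge(1-4\varepsilon)^a$, first moment to find $b$ vertices of $Y$ with no edge to $A$, and bipartite complementation (which does preserve the $C$-Bipartite-Ramsey property) for the upper bound. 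The computation checks out, and because $\log|Y|\ge\log|X|$ it even yields a single $\varepsilon=\varepsilon(C)$, e.g.\ $\min(1/8,\,c/C)$, valid for every ratio $\log m/\log|X|$ once $|X|$ is large; in particular it covers the intermediate regime $\exp\{\sqrt{\log m}\}\ll|X|\ll m^{\alpha}$ that the paper explicitly leaves open after the proof. Your worry that the obstacle sits ``between conditions (1) and (2)'' is therefore misplaced: in that territory $|X|\to\infty$, so $a=C\log|X|=o(|X|)$ and your main estimate applies verbatim; the only regime where it breaks is $|X|=O_C(1)$, which lies inside case (2) as literally stated. What the paper's heavier Lemma \ref{lemma 4.5} buys is not needed for Proposition \ref{proposition 4.4} itself but elsewhere: the $k/\log k$ gain over $\log n$ is exactly what makes the richness proof (Lemma \ref{lemma 4.7}) work on vertex sets of polynomially smaller size, so your argument would replace the paper's proof of the Proposition but not that lemma.

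Two small caveats you should make explicit. First, both your argument and the paper's implicitly require $|X|\to\infty$ with $m$ (so that $C\log|X|\le|X|/2$); for bounded $|X|$, which condition (2) formally allows, the statement is vacuous or false (an empty graph with $|X|=O(1)$ is vacuously $C$-Bipartite-Ramsey), and your pigeonhole fallback also needs $\max(|N_0|,|X\setminus N_0|)\ge C\log|X|$, i.e.\ again $|X|$ large in terms of $C$. Second, state the side convention when exhibiting the contradiction: your $A$ lives in $X$ and $B$ in $Y$, matching the paper's convention that the first index of $K_{a,b}$ refers to $X$.
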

For convenience, in the proof, let $|X|=n_1,|Y|=n_2,$ and $n_1n_2=m$. The first condition is equivalent to $\log n_2/\log n_1\leq M$ for some constant $M$ independent of $n_1,n_2$. The second condition is equivalent to $(\log n_1)^2=o(\log n_2)$. We first prove the first case.
\begin{proof}[Proof of Proposition 3.1.1.]

 When $\log{n_2}/\log{n_1}$ is bounded, it is sufficient to prove the statement in equal-sized case. To see this, note that there exists $M$ such that $n_2<n_1^M$, i.e. $\log{n_2}<M\log{n_1}$. So for any induced subgraph $G(X,Y)$ which has $n_1$ vertices on both sides, apply the equal-sized result (assume it is true) with $C'=CM$, we know that this induced subgraph has density within $(\varepsilon,1-\varepsilon)$ for some $\varepsilon=\varepsilon(C',\log m / \log |X|)>0$. As the density is within $(\varepsilon,1-\varepsilon)$ for all such induced subgraph, we know the density of $G(X,Y)$ is between $(\varepsilon,1-\varepsilon)$ as well.

So we now focus on equal-sized case. Consider $G(X,Y)$ with $|X|=|Y|=n$. If $e(G)>\dfrac{n^2}{40}$, the theorem is proved. If not, we have the following claim:
\begin{lemma} \label{lemma 4.5}
For any $k\geq 40$, let $G(n,n)$ be a bipartite graph with $n$ vertices on both sides and $e(G)\leq \dfrac {n^2}{k}$, then it contains either $K_{a,a}$ or $\overline {K_{a,a}}$ for $a\gg \dfrac{k\log(n)}{\log(k)}$.
\end{lemma}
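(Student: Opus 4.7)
Since $e(G) \leq n^2/k$ makes $G$ quite sparse, I aim to find an induced $\overline{K_{a,a}}$ via the probabilistic method: sample $B \subseteq Y$ uniformly at random of size $a$ and set $A^* := X \setminus N(B)$. If $\mathbb{E}[|A^*|] > a$, some outcome of $B$ satisfies $|A^*| \geq a$, and any $a$-element subset $A \subseteq A^*$ paired with $B$ yields the required induced $\overline{K_{a,a}}$, since in a bipartite graph the induced subgraph $G[A \cup B]$ has no edges precisely when $A \cap N(B) = \emptyset$.

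For the expectation, $\mathbb{E}[|A^*|] = \sum_{x \in X} \binom{n-d(x)}{a}\big/\binom{n}{a}$. The function $d \mapsto \binom{n-d}{a}$ is convex on the integers --- its second forward difference equals $\binom{n-d-2}{a-2}$, which is nonnegative by Pascal's rule --- so Jensen's inequality applied to a uniform $x \in X$ gives
\[
\mathbb{E}[|A^*|] \;\geq\; n\,\binom{n-\bar d}{a}\Big/\binom{n}{a},
\]
where $\bar d = e(G)/n \leq n/k$. Using monotonicity of the ratio in $\bar d$ to replace $\bar d$ by $n/k$, and estimating each factor $1 - (n/k)/(n-i) \geq 1 - 2/k$ for $i \leq a-1 \leq n/2$, I obtain $\mathbb{E}[|A^*|] \geq n(1-2/k)^a \geq n \cdot e^{-3a/k}$ for $k \geq 40$ (noting $\log(1-2/k) \geq -3/k$ in this range).

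Setting $a := k\log(n)/\log(k)$ gives $\mathbb{E}[|A^*|] \geq n^{1-3/\log k}$. Since $k \geq 40$ forces $\log k > 3$, the exponent is positive, so $\mathbb{E}[|A^*|]$ grows polynomially in $n$ while $a = \Theta(\log n)$. Hence $\mathbb{E}[|A^*|] > a$ for $n$ large, producing a choice of $B$ with $|A^*| \geq a$, which finishes the construction. The main obstacle is the convexity-based lower bound: verifying the Pascal identity for discrete convexity of $d \mapsto \binom{n-d}{a}$, and sharpening the binomial estimate enough that the resulting exponent $1 - C/\log k$ stays positive at the borderline $k=40$. A slightly more careful Taylor expansion in fact yields $\mathbb{E}[|A^*|] \geq n \cdot e^{-(1+o(1))a/k}$, which allows $a$ to be taken up to essentially $(1-\varepsilon)k\log n$, giving considerable slack beyond the target $a \gg k\log n/\log k$.
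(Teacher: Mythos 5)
Your proposal is correct, and it takes a genuinely different and considerably more elementary route than the paper's. You prove the stronger statement that a bipartite graph with $e(G)\le n^2/k$ \emph{always} contains an induced $\overline{K_{a,a}}$ with $a\ge k\log n/\log k$ (indeed your slack remark pushes this to order $k\log n$, saving the $\log k$ factor), via a first-moment argument which, read in the bipartite complement, is the classical K\H{o}v\'ari--S\'os--Tur\'an-style averaging: density $1-1/k$ forces a $K_{t,t}$ with $t=\Theta(k\log n)$. Since the lemma's conclusion is a disjunction, and since in each place the lemma is applied (Proposition \ref{proposition 4.4} with $k=40$, Lemma \ref{lemma 4.7} with $k=1/(8\gamma)$ constant) an induced $\overline{K_{a,a}}$ of this size contradicts the $C$-$Bipartite$-$Ramsey$ property just as well as a $K_{a,a}$ would, your stronger conclusion is fully usable downstream. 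The paper instead restricts to low-degree vertices, takes a maximal balanced independent pair, pigeonholes on neighbourhood traces to obtain large sets $B,C$ spanning no $\overline{K_{s,s}}$, and then invokes the Hattingh--Henning bound $b(p,q)\le\binom{p+q}{p}$ to extract a $K_{m,m}$ --- which is precisely why their statement is a disjunction; your argument removes the need for that machinery (though the estimate (\ref{equation 1}) developed there is reused in the proof of Proposition 3.1.2, so it is not wasted in the paper). Two minor points to tidy, neither a gap: the bound $1-\frac{n/k}{n-i}\ge 1-\frac{2}{k}$ requires $a\le n/2$, so you should either record that $k$ is an absolute constant and $n$ is large (true in every application; the paper's own proof makes comparable implicit assumptions, e.g.\ $q_1>n^{1/2}$) or add the harmless hypothesis $k\log n/\log k\le n/2$; and the Jensen step at the possibly non-integral mean $\bar d$ should either pass through the piecewise-linear extension of the integer-convex function $d\mapsto\binom{n-d}{a}$ or be replaced by the cruder observation that at least $n/2$ vertices of $X$ have degree at most $2n/k$, which changes only the constants.
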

\begin{proof}[Proof of Lemma \ref{lemma 4.5}]
Let $X$ and $Y$ be two sides of the bipartite graph. First, pick vertices in $X$ and $Y$ which have degree no more than $5n/k$, call them as $X'= \{x_1, x_2, ... , x_{m_1}\}$ and $Y'=\{y_1, y_2, ... ,y_{m_2}\}$. Let $G'=G(X'\cup Y')$ be the subgraph spanned by $X'$ and $Y'$. By a simple counting argument, we have $m_1, m_2 \geq n/2$.

Let $H$ be the largest independent graph $\overline{K_{l,l}}$ for which $l$ is the largest among all the equal-sized independent subgraph of $G^{\prime}$. We can assume that $l<c_1k\log n/\log k$ for some $c_1<1/240$, since otherwise the proof is complete. Let $H=\{x_1,...,x_l\}\cup\{y_1,...,y_l\}$ after relabelling. We have the inequality $\sum_{i=1}^{l} d(x_i) < 5nl/ k<10m_1l/k$ and an analogous statement holds for $\{y_1, y_2, ..., y_l\}$. By the same counting argument, at least $m_1/2$ vertices in $X^{\prime}$ connected to $s=20l/k$ or less vertices in $\{y_1, y_2, ... ,y_l \}$ and a respective statement holds for $Y^{\prime}$. 

We have $s<c\log n/\log k$, for $c<1/6$. Let $A_s:=\#\{A\subset \{y_1, y_2, ... ,y_l\}:|A|<s\}$ be the number of subsets of $\{y_1, y_2, ... ,y_l\}$ with fewer than $s$ elements. We then have:
\[ A_s \leq s\binom{l} {s-1} < s^{2} {\left(\dfrac {l}{s}\right)}^{s}e^{s} < s^{2}{\left (\dfrac{ek}{20} \right )}^{20l/k}< s^{2}k^{c\log n/\log k} < n^{1/3}, \]
 where in the second inequality we used Stirling's approximation and later made use of the fact that $c<1/6$, $s\ll \log n$ and $s-1\leq l/2$.
 
By pigeonhole principle, there must exist $A=\{a_1, a_2, ... , a_{p_1}\} \subset \{x_1, x_2, ... , x_l\}$ with $p_1<s$ and $B = \{b_1, b_2, ... , b_{q_1}\} \subset Y^{\prime} \setminus \{y_1, y_2, ... , y_l\}$ with $q_1 > ({m_2/2 - l})/{n^{1/3}} > n^{1/2} $, such that each vertex in $B$ is connected to all vertices in $A$ but no other vertices in $\{x_1, x_2, ... , x_l\} \setminus A$. Similarly, obtain the set $C \subset X^{\prime} \setminus \{x_1, x_2, ... , x_l\}$ of size $q_2 > n^{ {1}/{2}}$ and set $D \subset \{y_1, y_2, ... , y_l\}$ of size $p_2 < s$ by reversing the roles of $X^{\prime}$ and $Y^{\prime}$ in the previous argument. 

Let $G^{\prime \prime}=G(B\cup C)$ be the graph spanned by $B$ and $C$. Then $G^{\prime \prime}$ does not have $\overline{K_{s,s}}$ as an independent subset, since if it does, then consider $\overline{K_{s,s}}\cup (\overline H\setminus (A\cup D))$ which is an independent subgraph of $G$ with both sides of size more than $l$, contradicting to the fact that $l$ is the largest size among all the equal-sized independent subgraph of $G^{\prime}$.
\begin{center}
    \begin{tikzpicture}
    \foreach \x in {1,...,20} {
                \draw[very thin,opacity=0.5] (0,-0.7+\x/15+rand*0.01) -- (4,2.4+\x/30+rand*0.01);
                \draw[very thin,opacity=0.5] (0,-0.7+\x/15+rand*0.01) -- (4,2.45+\x/33+rand*0.01);
                \draw[very thin,opacity=0.5] (0,-0.7+\x/15+rand*0.01) -- (4,2.50+\x/38+rand*0.01);
                \draw[very thin,opacity=0.5] (0,2.4+\x/30+rand*0.01) -- (4,-0.7+\x/15+rand*0.01);
                \draw[very thin,opacity=0.5] (0,2.4+\x/33+rand*0.01) -- (4,-0.7+\x/15+rand*0.01);
                \draw[very thin,opacity=0.5] (0,2.4+\x/38+rand*0.01) -- (4,-0.7+\x/15+rand*0.01);

            }
            
    \foreach \x in {1,...,10} {
                \draw[very thin,opacity=0.5] (0,-0.7+\x/7.5+rand*0.1) -- (4,-0.7+\x/7.5+rand*0.1);

            }
    \draw[fill=white] plot [smooth cycle,tension=1]                       coordinates {(0,-0.68)(0.3,0)(0,0.65)(-0.3,0)};
    \draw[fill=white] plot [smooth cycle,tension=1]                       coordinates {(4,-0.68)(4.3,0)(4,0.65)(3.7,0)};
    \node at (0,0) {$C$};
    \node at (4,0) {$B$};
    \draw[fill=white] plot [smooth cycle,tension=1]                       coordinates {(0,2.4)(0.15,2.73)(0,3.07)(-0.15,2.73)};
    \draw[fill=white] plot [smooth cycle,tension=1]                       coordinates {(4,2.4)(4.15,2.73)(4,3.07)(3.85,2.73)};
    \node at (0,2.73) {$A$};
    \node at (4,2.73) {$D$};
    \draw plot [smooth cycle,tension=1]                       coordinates {(0,1.7)(0.3,2.5)(0,3.2)(-0.3,2.5)};
    \draw plot [smooth cycle,tension=1]                       coordinates {(4,1.7)(4.3,2.5)(4,3.2)(3.7,2.5)};
    \node at (-0.8,2.3) {$H\cap X$};
    \node at (4.8,2.3) {$H\cap Y$};

    \end{tikzpicture}

\end{center}
\begin{center}
    No induced $\overline{K_{s,s}}$ in $G''=G(B\cup C)$
\end{center}

Now, we claim that if a bipartite graph $G(\sqrt{n},\sqrt{n})$ with $\sqrt{n}$ vertices on both sides doesn't contain $\overline{K_{s,s}}$, then it must contain $K_{m,m}$ for some $m\gg \dfrac{ck\log(n)}{\log(k)}$.

From \cite{l8}, Hattingh and Henning define the Bipartite Ramsey Number to be the smallest integer $b=b(p,q)$ such that any bipartite graph with $b$ vertices on both sides will contain either $K_{p,p}$ or $\overline{K_{q,q}}$. They proved that $b(p,q)\leq \binom{p+q}{p}$. By setting $p=ck\log(n)/\log(k)$, $q=c\log(n)/\log(k)$ and applying Stirling's Formula, we have:
\[ \binom{p+q}{p}=\Theta\left(\dfrac{\Big(\dfrac{ck\log(n)}{\log(k)}\Big)^{\tfrac{c\log(n)}{\log(k)}}e^{\tfrac{c\log(n)}{\log(k)}}}{\sqrt{2\pi \dfrac{c\log(n)}{\log(k)}}\Big(\dfrac{c\log(n)}{\log(k)}\Big)^{\tfrac{c\log(n)}{\log(k)}}} \right)=O\left((ke)^{\tfrac{c\log(n)}{\log(k)}}\right)=O(n^{c+\tfrac{c}{\log(k)}})=o(\sqrt{n}) \tag{1} \label{equation 1}\]

Thus, if $G(n,n)$ contains $\overline{K_{a,a}}$ as induced subgraph for $a\gg \dfrac{k\log n}{\log k}$, we're done. If not, we can find its induced subgraph $G'(\sqrt{n},\sqrt{n})$ which doesn't contain $\overline{K_{s,s}}$ as induced subgraph for $s\leq c\log n/\log k$ and any $c<1/6$. Then by the argument above, for $m\geq ck\log n/\log k$ we have $b(s,m)\leq \binom{s+m}{s}=o(\sqrt{n})$, so $G(n,n)$ must contain $K_{m,m}$ as induced subgraph. We're done in both cases.
\end{proof}

From Lemma \ref{lemma 4.5} and the argument before, the first claim of Proposition \ref{proposition 4.4} is proved.
\end{proof}
\begin{proof}[Proof of Proposition 3.1.2.]
Now, we finish the proof of the second claim of Proposition \ref{proposition 4.4}. When $\log n_2/\log n_1$ is unbounded and $\log n_2/\log n_1> (\log{n_1})^{1+\varepsilon}$ for any $\varepsilon>0$, applying essentially the same argument in Lemma \ref{lemma 4.5}, replacing $n$ by $n_1,n_2$ appropriately, and taking $c=1/20<1/6$ in the proof, what we are left to prove is:
\begin{claim}
If $G(X,Y)$ is a bipartite graph with $|X|=\sqrt{n_1},|Y|=\sqrt{n_2}$, while $G(X,Y)$ does not contain  $\overline{K_{\log n_1/20\log k,\log n_2/20\log k}}$ as induced subgraph, then it contains $K_{C\log n_1,C\log n_2}$ as induced subgraph.
\end{claim}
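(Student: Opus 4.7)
The plan is to establish this asymmetric bipartite Ramsey-type claim by an iterative pigeonhole argument carried out on the larger side $Y$, in the spirit of the classical recursive proof of $R(s,t)\le\binom{s+t}{s}$ but executed directly so as to accommodate the imbalance $|Y|\gg |X|$. The Hattingh--Henning binomial bound used in the balanced case in \eqref{equation 1} splits the two sides symmetrically and is too wasteful here, so a direct argument is needed.

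Write $s_1 = \log n_1/(20\log k)$, $s_2 = \log n_2/(20\log k)$, $p_1 = C\log n_1$, and $p_2 = C\log n_2$. I would inductively build disjoint sets $R_t, B_t \subseteq X$ together with a shrinking set $Y_t \subseteq Y$ satisfying the invariants $|R_t|+|B_t|=t$, $|Y_t|\ge \sqrt{n_2}/2^t$, every vertex of $R_t$ adjacent to every vertex of $Y_t$, and every vertex of $B_t$ non-adjacent to every vertex of $Y_t$. At each step I pick any $x \in X\setminus(R_t\cup B_t)$, partition $Y_t$ into $N(x)\cap Y_t$ and $\overline{N(x)}\cap Y_t$, restrict $Y_{t+1}$ to whichever side has size at least $|Y_t|/2$, and add $x$ to $R_{t+1}$ or $B_{t+1}$ accordingly. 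All four invariants are clearly preserved.

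Next I would exploit the hypothesis that $G$ contains no induced $\overline{K_{s_1,s_2}}$: as long as $|Y_t|\geq s_2$, we cannot have $|B_t|\ge s_1$, for $B_t$ together with any $s_2$ vertices of $Y_t$ would realise exactly this forbidden structure. Thus $|B_t|<s_1$ and hence $|R_t|>t-s_1$ throughout. Running the construction to step $t^\star := p_1+s_1$ therefore guarantees $|R_{t^\star}|\ge p_1$; picking any $p_1$ vertices of $R_{t^\star}$ on the $X$-side together with any $p_2$ vertices of $Y_{t^\star}$ on the $Y$-side then produces the required induced $K_{p_1,p_2}$, provided $|Y_{t^\star}|\ge p_2$.

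The main obstacle is precisely this final parameter check: we need $\sqrt{n_2}/2^{p_1+s_1}\ge p_2$, or equivalently (after taking $\log_2$) an inequality of the form $(C+1/(20\log k))\log n_1 + O(\log\log n_2)\le \tfrac12\log n_2$. This is exactly the place where the hypothesis of Proposition \ref{proposition 4.4} in Case 2, namely $(\log n_1)^2 = o(\log n_2)$ (equivalently $|X|=o(\exp\{\sqrt{\log m}\})$), earns its keep: under it $\log n_2$ grows strictly faster than any fixed polynomial in $\log n_1$, so the inequality holds comfortably for large $n_1$, and $|R_{t^\star}|\ge p_1$ together with $|Y_{t^\star}|\ge p_2$ are secured simultaneously, completing the proof of the claim.
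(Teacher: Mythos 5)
Your proof is correct, but it takes a genuinely different route from the paper. The paper proves the Claim by partitioning $Y$ into $\sqrt{n_2}/n_1^{1/10}$ blocks of size $n_1^{1/10}$, applying the balanced bound \eqref{equation 1} inside each block to extract a copy of $K_{m,m}$ or $\overline{K_{s,s}}$, and then pigeonholing over the at most $\binom{n_1^{1/10}}{m}=\exp\{\Theta((\log n_1)^2)\}$ possible $X$-sides of these copies, so that $\log n_2/\log n_1$ blocks share one $X$-side and the union of their $Y$-sides gives the asymmetric complete (or empty) bipartite graph; the factor $\exp\{\Theta((\log n_1)^2)\}$ in that count is precisely what forces the hypothesis $(\log n_1)^2=o(\log n_2)$ at this step. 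Your one-sided halving recursion (an Erd\H{o}s--Szekeres-type argument run only against the large side $Y$) avoids the pigeonhole over $X$-subsets altogether: the only loss is the factor $2^{t^\star}$ with $t^\star=(C+1/(20\log k))\log n_1$, so you need merely $\log n_1=o(\log n_2)$ (indeed $\log n_2\geq K_C\log n_1$ for a suitable constant $K_C$ suffices), a weaker requirement than the paper uses here. Two trivial checks should still be recorded: you need $t^\star\leq|X|=\sqrt{n_1}$ so that a fresh vertex $x$ is always available, and at the final step you need $|Y_{t^\star}|\geq\max(s_2,p_2)$ rather than just $p_2$ (when $C<1/(20\log k)$ one has $s_2>p_2$, and $|B_{t^\star}|<s_1$ is only guaranteed while $|Y_{t^\star}|\geq s_2$); both follow from the same estimate $\sqrt{n_2}/2^{t^\star}=n_2^{1/2-o(1)}$. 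What the paper's route buys is the reuse of the Hattingh--Henning machinery already set up in \eqref{equation 1}; what yours buys is a quantitatively better range of validity, which is of interest for the unresolved intermediate regime $\exp\{\sqrt{\log m}\}\ll f(m)\ll m^{\alpha}$ mentioned after Proposition \ref{proposition 4.4} --- though the reduction preceding the Claim would also need to be re-examined before that gap could actually be closed.
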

To prove the claim, we use a different method. Choose $k$ to be the smallest positive integer such that $k/\log k>C$. From ($\ref{equation 1}$), taking $n=n_1$, $c=1/20$, then $c+c/\log k<n^{1/10}$, so for any induced subgraph $H\subseteq G(X,Y)$ with $n_1^{1/10}$ vertices on both sides, $H$ either contains induced $\overline{K_{s,s}}$ or $K_{m,m}$, for $s=\log n_1/20\log k$, $m=k\log n_1/20\log k$. 

There are at least $\sqrt{n_2}/n_1^{1/10}$ many such $H$ in $G(X,Y)$. Firstly, suppose $\sqrt{n_2}/{2n_1^{1/10}}$ of them contain $K_{m,m}$ as an induced subgraph. By pigeon-hole principle, at least $\log n_2/\log n_1$ such $H$ will have the same vertex set in $X$ among $\log n_2/\log n_1\binom{n_1^{1/10}}{\log n_1/20\log k}+1\leq n_1^{k\log(n_1)/10\log k}\log n_2/\log n_1 $ many of them. Their union contains $K_{k\log n_1/20\log k,k\log n_2/20\log k}$ as induced subgraph, therefore it has $K_{C\log n_1,C\log n_2}$ as an induced subgraph, since $k/\log k>C$. To get $n_1^{k\log(n_1)/10\log k}\log n_2/\log n_1 $ many $H$, the number of vertices in $Y$ should be
\begin{align*}
     n_1^{k \log(n_1)/10\log(k) + 1/10}\log(n_2)/\log(n_1) &=\exp\left((\log n_1)^{2}\dfrac{k}{10\log k}+\dfrac{\log(n_2)}{\log(n_1)}+\dfrac{1}{10}\log(n_1)\right) \\
      & \leq \exp\left(3\max\left\{(\log n_1)^{2}\dfrac{k}{10\log k},\dfrac{\log(n_2)}{\log(n_1)},\dfrac{1}{10}\log(n_1)\right\} \right) \\ & \leq \exp\Big(\dfrac{1}{4}\log n_2\Big) \\ &\leq \dfrac{\sqrt{n_2}}{2}
\end{align*}

The inequality holds as $(\log n_1)^2=o(\log n_2)$, and $k=k(C)$ is a constant. This is a contradiction to the assumption that $G(X,Y)$ does not have $K_{C\log n_1,C\log n_2}$ as induced subgraph. For the other case, there are at least $\sqrt{n_2}/{2n_1^{1/10}}$ many $H$ has $K_{m,m}$ as induced subgraph. Apply the same argument again, we would get the result that $G(X,Y)$ has $\overline{K_{\log n_1/20\log k,\log n_2/20\log k}}$ as an induced subgraph. We are done in both cases.
\end{proof}
We are unable to show the similar density result when $\exp\{ \sqrt{\log m}\}\ll f(m)\ll m^{\alpha}$, yet we suspect the result will also hold in this case.

\section{Overview of the proof of Theorem \ref{theorem 1.4}}
Our proof of Theorem \ref{theorem 1.4} is based on the following key construction. We claim, we can always construct sufficiently many sets of induced subgraphs, which are well-separated in size, using probabilistic methods. Specifically, it will be sufficient to show that for a given function $g(m)$, in each set of $O(g(m))$ consecutive positive integers in the range $[0, e(G)]$, we can construct $\Omega (g(m))$ induced subgraphs with different sizes in this range. For the immediate implication of the Theorem \ref{theorem 1.4}, we will need to prove that $e(G) = \Omega (m)$ for $C$-$Bipartite$-$Ramsey$ graphs with vertex sets of polynomial size in $m$ and the result will follow by picking $\Omega(m/g(m))$ different sets of $O(g(m))$ consecutive integers in the aformentioned range, which do not intersect pairwise. In our proof, we will prove this exact claim for a carefully chosen function $g(m)$.

In \cite{l2}, Kwan and Sudakov proved a very similar claim, although concerning $C$-$Ramsey$ graphs $G$ on $n$ vertices. They showed, that one can construct $\Omega(n^{3/2})$ subgraphs on different sizes in intervals of length $O(n^{3/2})$, which do not intersect pairwise. A well, known fact in $C$-$Ramsey$ theory is that $e(G)=\Omega(n^2)$, which gives the result that for all $C$-$Ramsey$ graphs, $\Phi(G)= \Omega(n^2)$. Kwan and Sudakov followed a construction, based on randomly generating a vertex set $U$, such that $e(U)$ is not too far away from it's expected value. Then, using richness and diversity of $C$-$Ramsey$ graphs, they construct another vertex set $W$, whose vertices have similar degree, yet there exist many vertices whose degrees are well-separated. It is exactly these properties of $W$, which will enable Kwan and Sudakov to construct sufficiently many induced subgraphs with different sizes by considering the values of $e(U\cup Z)$, for different choices of $Z\subseteq W$.

In our proof, we largely follow the same construction as Kwan and Sudakov in \cite{l2}, yet do this for a different set of graphs. First, it is important to make the trivial observation that bipartite graphs on $n$ vertices have an independent set of size at least $n/2$, which immediately implies that they are not $C$-$Ramsey$. However, one would expect that given a similar condition as $C$-$Ramsey$, it should be possible to show that bipartite graphs have `many' induced subgraphs of different sizes. This is exactly the intuition, which leads us to come up with the definition $C$-$Bipartite$-$Ramsey$ to prove Theorem \ref{theorem 1.4}.

When it comes to new ideas in our proof, we had to non-trivially alter definitions of diversity and richness, since we're dealing with a significantly different set of graphs. Most importantly, as remarked above, we will allow vertex sets of bipartite graphs considered be imbalanced, which will  complicate the proof as it will cause higher variance in our probabilistic method of generating a vertex set $U$ with a set number of edges. This will mean that, the other vertex set $W$ will need to be chosen significantly larger in size than using the methods of \cite{l2} allows, in order for us to construct sufficiently many subgraphs of different sizes. For this reason, we will need to introduced a new definition of given bipartite graph being $(\gamma,  \delta, \varepsilon)$-bipartite-rich, which will allow us to generate $W$ to be as large as required by choosing $\delta$ to be sufficiently small. Also, note that \cite {l2} only considers the set of vertices with unique degrees into $U$ in order to construct $W$, which would contain vertices, which are well-separated in degree. We have realised that this condition can be significantly relaxed in order to acquire $W$ significantly larger in size and we'll make use of this observation in our proof.

In practice, we will generate $W$ as the union of disjoint vertex sets $S$, $T$ and $Z$, all of which will serve different purposes. We will require a condition, that all vertices in $W$ have similar degree into $U$ for better control, yet all degrees of vertices in $S$ into $U$ will be sufficiently larger then degrees of all vertices in $T$ into $U$. By picking different subsets of $S \cup T$ and all vertices in $U$, we will already be able to generate many induced subgraphs of different sizes. Finally, we will require $Z$ to have many vertices of different degrees, which will allow us to finish the proof of Theorem \ref{theorem 1.4} by adding different vertices of $Z$ to previously generated induced subgraphs. It is important to note that we will only allow vertices of $Y$ to be in $W$, since $Y$ will generally be much bigger in size. Since our graph $G$ is bipartite, this will give us an extremely valuable condition that there are no edges between vertices in $W$, which will lead to significant simplification in the proof of Theorem \ref{theorem 1.4}, when compared to the approach of \cite{l2}.

Lastly, it is important to point out that to keep the size of $W$ as large as possible, sometimes $W$ will need to be taken as a set of disjoint pairs of vertices. This will require a separate definition of diversity, but will not cause any other major changes in the proof. Also, the reader should think of richness and diversity, as special properties of $C$-$Bipartite$-$Ramsey$ graphs. It is precisely these properties that enable us to use probabilistic arguments in order to efficiently find many sets of induced subgraphs, well-separated in size. In the case, when for all $\alpha > 0$, $|X| < m^{\alpha}$ our methods break down because of the presence of the logarithm in the definition of $C$-$Bipartite$-$Ramsey$. We choose to continue the use of the logarithm from $C$-$Ramsey$ theory, as choosing functions of higher asymptotic magnitude would cause major problems, while deducing the quasi-randomness properties, necessary for our proof. Although, we expect that other methods could be used in this case, when $|X|$ is sub-polynomial to give similar results. This is because we expect $\Phi(G)$ to increase in size, as vertex sets become more imbalanced, keeping the edge count of $G$ fixed. One can see this heuristically by noting that as the size of $|X|$ decreases, each vertex in $Y$ is on average connected to a smaller number of vertices in $X$, allowing for smaller differences in size between induced subgraphs.

\section {Proof of Theorem \ref{theorem 1.4}}

As promised before, we now deduce other diversity conditions from richness.
\begin{lemma} \label{lemma 4.6}
Assume a bipartite graph $G(X,Y)$ is $(\gamma, \delta, \varepsilon)$-rich and $\gamma <  1/2$. Then, the following statements all hold:
\begin{enumerate}
    \item G is $(\delta, \varepsilon/2)$-bipartite-diverse.
    \item G is $(\alpha\varepsilon/2, \delta, \alpha)_2$-bipartite-diverse for $\alpha \geq 2\gamma$.
    \item There are at most $|Y|^{1+\delta}$ disjoint pairs of vertices $\boldsymbol {y}= \{y_1, y_2\} \in \binom {Y}{2}$ such that the inequality $|N(y_1)\triangle (\overline {N(y_2)}\cap X)|<\varepsilon|X|/2$ holds, and there are at most $|X|^{1+\delta}$ disjoint pairs of vertices $\boldsymbol {x}=\{x_1, x_2\} \in \binom {X}{2}$ such that $|N(x_1)\triangle (\overline {N(x_2)}\cap Y)| <  {\varepsilon |Y|}/{2}$ holds.
\end{enumerate}
\end{lemma}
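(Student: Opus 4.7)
The plan is to derive each of the three parts from a judicious application of the $(\gamma,\delta,\varepsilon)$-richness hypothesis to a carefully chosen subset of $X$ or $Y$ built from the neighborhoods of the vertices in question. The structural feature I will exploit throughout is $\gamma<1/2$: for any vertex $v$, at least one of $N(v)$ and its complement has size $\geq\gamma|Y|$ (or $\geq\gamma|X|$, as appropriate), so that richness is applicable no matter where the relevant neighborhood sits.

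For Part~(1), I would fix $x_1\in X$ and set $W_Y:=N(x_1)$ if $|N(x_1)|\geq\gamma|Y|$ and $W_Y:=Y\setminus N(x_1)$ otherwise; either way $|W_Y|\geq\gamma|Y|$. If $x_2\in X$ satisfies $|N(x_1)\triangle N(x_2)|<(\varepsilon/2)|Y|$, then the corresponding one of $|\overline{N(x_2)}\cap W_Y|$ or $|N(x_2)\cap W_Y|$ is bounded above by $|N(x_1)\triangle N(x_2)|<\varepsilon|Y|$, so $x_2$ must lie in the exceptional set of richness applied to $W_Y$, whose size is $\leq|X|^\delta$. The statement for $y_1\in Y$ follows by swapping $X$ and $Y$ throughout.

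For Part~(3), I would use the identity
\[
    |N(y_1)\triangle(\overline{N(y_2)}\cap X)|=|N(y_1)\cap N(y_2)|+|X\setminus(N(y_1)\cup N(y_2))|,
\]
so the hypothesis forces both summands on the right below $(\varepsilon/2)|X|$. For each fixed $y_1\in Y$, I would apply richness with $W_X$ chosen (as in Part~(1)) to have $|W_X|\geq\gamma|X|$; in each case the relevant one of $|N(y_2)\cap W_X|$ or $|\overline{N(y_2)}\cap W_X|$ is $<\varepsilon|X|$, giving at most $|Y|^\delta$ eligible partners $y_2$. Summing over the $|Y|$ choices of $y_1$ yields the desired bound of $|Y|^{1+\delta}$.

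Part~(2) is the main obstacle. Writing $A_{11}:=N(x_1)\cap N(x_2)$ and $A_{00}:=Y\setminus(N(x_1)\cup N(x_2))$, the hypothesis becomes $|A_{11}|+|A_{00}|\geq(\alpha\varepsilon/2)|Y|$. Expanding $|N(\boldsymbol x)\triangle N(\boldsymbol x')|$ at the multiset level produces three separately nonnegative pieces; in particular
\[
    2|A_{11}|-|N(x_1')\cap A_{11}|-|N(x_2')\cap A_{11}|<\alpha|Y|
\]
and the symmetric bound on $A_{00}$, forcing both $x_1',x_2'$ to satisfy $|\overline{N(x_i')}\cap A_{11}|<\alpha|Y|$ and $|N(x_i')\cap A_{00}|<\alpha|Y|$. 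The assumption $\alpha\geq 2\gamma$ combined with $\gamma<1/2$ should ensure that at least one of $A_{11}$, $A_{00}$ has size $\geq\gamma|Y|$; applying richness to whichever one does will confine both $x_i'$ to a common exceptional set of size $\leq|X|^\delta$. The hard part will be to arrange the constants so that this size condition is always in force and the resulting pair count comes out at $|X|^\delta$ as stated, rather than the naive $|X|^{2\delta}$ one gets from picking both coordinates of the pair independently from the exceptional set.
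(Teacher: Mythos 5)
Your parts (1) and (3) are correct and follow essentially the paper's own route: since $\gamma<1/2$, one of $N(x_1)$ and its complement has size $\geq\gamma|Y|$, the relevant intersection is contained in the symmetric difference under consideration, so every close partner lies in the single exceptional set of richness of size $\leq|X|^{\delta}$ (and for (3) one multiplies by the $|X|$, resp.\ $|Y|$, choices of the first vertex). Your identity $|N(y_1)\triangle(\overline{N(y_2)}\cap X)|=|N(y_1)\cap N(y_2)|+|X\setminus(N(y_1)\cup N(y_2))|$ is exactly the right decomposition; the paper's written containments for (3) put the complement on the wrong factor, but the argument is the same.

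The genuine gap is part (2), which you leave unresolved, and both of your worries stem from reading the printed parameters literally. Under the literal reading the hypothesis only gives $|A_{11}|+|A_{00}|\geq(\alpha\varepsilon/2)|Y|$, so the larger piece has size $\geq(\alpha\varepsilon/4)|Y|\geq(\gamma\varepsilon/2)|Y|$, which is \emph{not} $\geq\gamma|Y|$ once $\varepsilon<2$; your claim that $\alpha\geq 2\gamma$ and $\gamma<1/2$ "should ensure" the size condition is false in that regime, and likewise the closeness bound $\alpha|Y|$ need not be $\leq\varepsilon|Y|$, so your per-vertex estimates need not place $x_i'$ in the exceptional set. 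What the paper actually proves (and what Lemma \ref{lemma 4.9} actually uses: surviving pairs satisfy $|N(w_1)\triangle(\overline{N(w_2)}\cap X)|\geq 2\gamma|X|$ while the closeness threshold is $4\gamma^2|X|$) is the version with the first and third parameters interchanged: hypothesis threshold $\alpha|Y|$, closeness threshold $(\alpha\varepsilon/2)|Y|$. Then one of $A_{11}=N(x_1)\cap N(x_2)$, $A_{00}=\overline{N(x_1)}\cap\overline{N(x_2)}\cap Y$ has size $\geq\alpha|Y|/2\geq\gamma|Y|$, richness applies to that one set, and since $(\alpha\varepsilon/2)|Y|\leq\varepsilon|Y|$ your multiset inequality $|\overline{N(x')}\cap A_{11}|\leq|N(\boldsymbol{x})\triangle N(\boldsymbol{x'})|$ (or $|N(x')\cap A_{00}|\leq|N(\boldsymbol{x})\triangle N(\boldsymbol{x'})|$ in the other case) puts \emph{every} vertex of \emph{every} close pair into that one exceptional set of size $\leq|X|^{\delta}$. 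The $|X|^{2\delta}$ issue you flag as "the hard part" never arises: richness is invoked once, not once per coordinate, and the pairs counted in the definition are pairwise disjoint, so $t$ close pairs supply $2t$ distinct exceptional vertices, forcing $t\leq|X|^{\delta}/2\leq|X|^{\delta}$; equivalently, as in the paper, assuming $\geq|X|^{\delta}$ disjoint close pairs exhibits more than $|X|^{\delta}$ exceptional vertices and contradicts richness directly.
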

\begin{proof}
Without loss of generality, we prove each statement only for vertices of $X$ only, as we can conclude the full proof by exchanging the roles of $X$ and $Y$.
For the first statement, note that for each $x_1 \in X$, either $|N(x_1)|\geq |Y|/2$ or $|\overline {N(x_1)}\cap Y| \geq |Y|/2$. Since $\gamma < 1/2$,  in the former case, there are at most $|X|^{\delta}$ vertices $x_2 \in X$ such that $|\overline{N(x_2)}\cap N(x_1)| < \varepsilon |N(x_1)| \leq \varepsilon |Y|/2$ and in the latter case, there are at most $|X|^{\delta}$ vertices $x_2 \in X$ such that $|N(x_2)\cap (\overline {N(x_1)}\cap Y)|< \varepsilon |\overline{N(x_1)} \cap Y|\leq \varepsilon |Y|/2$.
In each case, there are at most $|X|^{\delta}$ vertices $x_2 \in X$ such that $|N(x_1) \triangle N(x_2)| < \varepsilon |Y|/2$, since $\overline {N(x_2)}\cap N(x_1) \subseteq N(x_1)\triangle N(x_2)$ and $N(x_2)\cap (\overline{N(x_1)}\cap Y) \subseteq N(x_1)\triangle N(x_2)$.

For the second statement, let $\boldsymbol {x} = \{x_1, x_2\}$ and note that if $|N(x_1)\triangle (\overline {N(x_2)} \cap Y)| \geq \alpha |Y|$, then either $|N(x_1)\cap N(x_2)| \geq \alpha |Y|/2$ or $|\overline {N(x_1)} \cap \overline {N(x_2)}\cap Y| \geq \alpha |Y|/2$. Without loss of generality, assume that $|N(x_1)\cap N(x_2)| \geq \alpha |Y|/2 \geq \gamma |Y|$, since $\alpha \geq 2\gamma$ .
For contradiction, suppose that there exists a set $Z$ of at least $|X|^{\delta}$ disjoint pairs $\boldsymbol {x'} \in \binom {X} {2}$ such that $|N(\boldsymbol{x})\triangle N(\boldsymbol{x'})|< \alpha \varepsilon |Y|/2$. Then, for each vertex $x'$ in a given $\boldsymbol{x'}\in Z$, we have $|\overline {N(x')}\cap N(x_1) \cap N(x_2)| \leq |N(\boldsymbol{x}) \triangle N(\boldsymbol {x'})| \leq \alpha \varepsilon |Y|/2 \leq \varepsilon |N(x_1)\cap N(x_2)|$, which contradicts $(\gamma, \delta, \varepsilon)$-bipartite-richness, as $|N(x_1)\cap N(x_2)| \geq \gamma |Y|$. Hence, the claim follows.

For the third statement, we will prove that for each of $|X|$ choices in $x_1 \in X$ there are at most $|X|^{\delta}$ vertices $x_2 \in X$ for which $|N(x_1)\triangle (\overline{N(x_2)}\cap Y)|< \varepsilon |Y|/2$. Again, note that either $|N(x_1)| \geq |Y|/2$ or $|\overline {N(x_1)}\cap Y| \geq |Y|/2$. In the former case, there are at most $|X|^{\delta}$  vertices $x_2 \in X$ for which $|N(x_1)\cap \overline {N(x_2)}|< \varepsilon |N(x_1)| \leq \varepsilon |Y|/2$ and in the latter case, there are again at most $|X|^{\delta}$ vertices $x_2 \in X$ such that $|\overline {N(x_1)}\cap \overline{N(x_2)} \cap Y| < \varepsilon |\overline {N(x_1)} \cap Y| \leq \varepsilon |Y|/2$. Note that $N(x_1) \cap \overline {N(x_2)} \subseteq N(x_1) \triangle (\overline {N(x_2)} \cap Y)$ and $\overline {N(x_1)}\cap \overline {N(x_2)} \cap Y \subseteq N(x_1) \triangle (\overline {N(x_2)}\cap Y)$ to have that in each case, the required claim follows.
\end{proof}

Now, we will prove that $C$-$Bipartite$-$Ramsey$ graphs satisfy our bipartite-richness definition.

\begin{lemma} \label{lemma 4.7}
For any $C$-$Bipartite$-$Ramsey$ graph $G(X, Y)$, with $|X|=f(m)$ and $|Y|=m/f(m)$ such that for some $\alpha > 0$, $m^{\alpha} \leq f(m) \leq \sqrt m$, then for any $\delta >0$, there exists $\gamma = \gamma (C,\delta, \alpha) > 0$ with $\varepsilon = 4\gamma$ such that $G$ is $(\gamma,  \delta, \varepsilon)-bipartite-rich$.
\end{lemma}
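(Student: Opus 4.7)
The plan is to argue by contradiction, leveraging the $C$-Bipartite-Ramsey hypothesis to rule out any witness to the failure of bipartite-richness. Suppose, with $\gamma = \gamma(C,\delta,\alpha) > 0$ to be chosen later and $\varepsilon = 4\gamma$, that $G$ is not $(\gamma,\delta,\varepsilon)$-bipartite-rich. By the symmetry built into the definition, I may assume that the $X$-side condition fails: there exists $W_X \subseteq X$ with $|W_X| \geq \gamma|X|$ and more than $|Y|^\delta$ vertices $y \in Y$ for which $|N(y)\cap W_X| < \varepsilon|X|$ or $|\overline{N(y)}\cap W_X| < \varepsilon|X|$. A pigeonhole step then isolates a subset $Y' \subseteq Y$ with $|Y'| \geq |Y|^\delta/2$, all of whose vertices satisfy the same alternative. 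The two alternatives are dual under bipartite complementation (swapping $N$ with $\overline{N}$ on the $Y$-side), so I focus on the low-overlap case $|N(y)\cap W_X| < \varepsilon|X|$ for all $y\in Y'$, aiming to exhibit an induced $\overline{K_{a,b}}$ in $G$ with $a \geq C\log|X|$ and $b \geq C\log|Y|$, directly contradicting the $C$-Bipartite-Ramsey hypothesis.

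The induced bipartite subgraph $H := G[W_X \cup Y']$ is sparse: $e(H) \leq \varepsilon|X|\cdot|Y'|$, so the average degree of a $W_X$-vertex into $Y'$ is at most $\varepsilon|X||Y'|/|W_X|$. Markov's inequality then produces a linear-size subset $W^\ast \subseteq W_X$ whose members each have at most twice the average number of neighbours in $Y'$. I would then sample a uniformly random $Y'' \subseteq Y'$ of size $b := \lceil C\log|Y|\rceil$, and for each $w \in W^\ast$, a direct hypergeometric computation (using $|Y'| \gg b$, which is guaranteed by $|Y'| \geq |Y|^\delta/2$ and $b = \Theta(\log|Y|)$) shows that the probability of $Y''$ avoiding the neighbourhood of $w$ in $Y'$ is at least $(1 - c\eta)^b$ for an absolute constant $c$ and an effective sparsity $\eta = \Theta(\varepsilon|X|/|W_X|)$. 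Fixing a $Y''$ that attains the expected count then yields at least $|W^\ast|(1-c\eta)^b$ vertices of $W^\ast$ with no neighbour in $Y''$; together with $Y''$ these form an induced $\overline{K_{a,b}}$ in $H$, and therefore in $G$, as soon as the count exceeds $a := \lceil C\log|X|\rceil$.

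The technical heart of the proof is precisely forcing the inequality $|W^\ast|(1-c\eta)^b \geq a$. Taking logarithms and linearising via $\log(1-c\eta) \geq -2c\eta$ reduces it to $\log|X| - 2cC\eta\log|Y| \gtrsim \log\log|X|$, equivalently to keeping $C\eta\cdot(\log|Y|/\log|X|)$ below a fixed fraction of $1$. This is precisely the step at which the polynomial lower bound $|X|,|Y|\geq m^\alpha$ is indispensable: it converts to $\log|Y|/\log|X|\leq (1-\alpha)/\alpha$, a constant depending only on $\alpha$, so the inequality can be closed by choosing $\gamma$ (and hence $\eta$) as a sufficiently small function of $C$ and $\alpha$; without a polynomial lower bound on $|X|$ the ratio $\log|Y|/\log|X|$ could be unbounded and no fixed $\gamma$ would suffice. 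The explicit dependence on $\delta$ appears only through the $o(1)$ sampling-without-replacement correction, which uses $|Y'|\geq |Y|^\delta/2$. The symmetric statement of the definition, for subsets $W_Y\subseteq Y$, is proved by the same scheme with $X$ and $Y$ exchanged; in that direction the probabilistic exponent scales with $\log|X|\leq\log|Y|$, making the analogous inequality strictly easier and confirming that the unbalanced $X$-side case is the genuine obstacle.
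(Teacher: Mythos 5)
Your overall strategy---contradiction, pigeonholing the bad vertices into a set $Y'\subseteq Y$ with $|Y'|\ge|Y|^{\delta}/2$ satisfying one alternative, then sampling a random $b$-element set $Y''\subseteq Y'$ with $b=\lceil C\log|Y|\rceil$ and counting common non-neighbours in $W_X$ to exhibit an induced $\overline{K_{a,b}}$ with $a\ge C\log|X|$---is genuinely different from the paper's route. The paper instead uses a counting argument to extract \emph{equal-sized} subsets $S_X\subseteq W_X$ and $S_Y\subseteq Y'$ of polynomial size $\sqrt{h(m)}$ with density below $8\gamma$, and then applies Lemma \ref{lemma 4.5} to produce a balanced $K_{a,a}$ or $\overline{K_{a,a}}$ with $a\ge C\log m$; there the hypothesis $f(m)\ge m^{\alpha}$ enters through $h(m)\ge m^{\beta}$, whereas in your argument it enters through $\log|Y|/\log|X|\le(1-\alpha)/\alpha$. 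Your route, when it works, is arguably more direct since it yields the unbalanced forbidden pattern immediately.

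However, the step you yourself call the technical heart has a genuine gap as written. You take the richness threshold literally as $\varepsilon|X|$, so your effective sparsity is $\eta=\Theta(\varepsilon|X|/|W_X|)$, and you close the final inequality ``by choosing $\gamma$ (and hence $\eta$) sufficiently small''. But $\varepsilon=4\gamma$ while the only lower bound on $|W_X|$ is $\gamma|X|$, so $\eta$ is only $O(1)$ (it is of order $\varepsilon/\gamma=4$ when $|W_X|=\lceil\gamma|X|\rceil$) no matter how small $\gamma$ is: shrinking $\gamma$ shrinks $\varepsilon$ and the permitted size of $W_X$ at the same rate, so $(1-c\eta)^{b}$ need not be large enough and the expected count of common non-neighbours can collapse. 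Indeed, with the threshold $\varepsilon|X|$ the lemma is vacuously false: taking $W_X$ of size exactly $\lceil\gamma|X|\rceil<4\gamma|X|$ makes every $y\in Y$ a bad vertex, so no large graph is $(\gamma,\delta,4\gamma)$-bipartite-rich. The definition has to be read, as the paper's own proof of this lemma and the applications inside Lemma \ref{lemma 4.6} do, with the threshold $\varepsilon|W_X|$, proportional to the chosen subset. Under that reading your Markov step gives each $w\in W^{\ast}$ at most a $2\varepsilon=8\gamma$ fraction of $Y'$ as neighbours, so $\eta\le 8\gamma$ really does tend to $0$ with $\gamma$, your hypergeometric estimate (valid since $b=O(\log|Y|)=o(|Y|^{\delta})$) closes the inequality as you describe, and the dual alternative yields $K_{a,b}$ by complementation. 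So the argument is repairable, but the assertion that small $\gamma$ forces small $\eta$ is false under the normalisation you actually used, and fixing it requires correcting the threshold rather than tuning constants.
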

\begin {proof}
Suppose, for contradiction that $G$ fails to be $(\gamma, \delta, 4\gamma)$-bipartite-rich for all $\gamma > 0$. As before, it will suffice to prove richness in one direction, and the other direction will be obtained by exchanging the roles of $X$ and $Y$ and making the obvious adjustments. Hence,  we can assume that a set $W_X \subseteq X$ with $|W_X| \geq \gamma |X|\geq\gamma f(m)$ and there exist a set $Z_Y \subseteq Y$ with $|Z_Y| \geq |Y|^{\delta}$ contradicting $(\gamma, \delta, 4\gamma)$-bipartite-richness, which means that $\forall v \in Z_Y, |N(v)\cap W_X| < 4\gamma|W_X|$ or $|\overline {N(v)} \cap W_X| < 4 \gamma |W_X|$. Without loss of generality, assume that for $T_Y \subseteq Z_Y$ of size $|T_Y| \geq |Y|^{\delta}/2$ such that $\forall v \in T_Y, |N(v)\cap W_X| < 4\gamma|W_X|$. Now let $h(m) = \min (\gamma m^{\alpha}, (m/f(m))^{\delta}/2)$ and note that $h(m) \geq m^{\beta}$ for some $\beta=\beta(\alpha,\delta) >0$. By a simple counting argument, there exist subsets $S_X \subset W_X$ and $S_Y \subset Z_Y$ such that $|S_X|=|S_Y|=\sqrt {h(m)}$ for $m$ large enough such that $d(S_X,S_Y)<8\gamma$. Then, by Lemma $\ref{lemma 4.5}$ $G[S_X, S_Y]$  contains either $K_{a, a}$ or $\overline {K_{a, a}}$ as a subgraph with $ a\geq \frac {\beta \log m}{3840\gamma \log (1/8\gamma)}$ and choosing $\gamma$ dependent on $C$ and $\beta
= \beta (\delta, \alpha)$ one can find a $\gamma = \gamma (C, \delta, \alpha)$ small enough, so that $a\geq C\log m$, which contradicts the $C$-$Bipartite$-$Ramsey$ property as $G[S_X, S_Y]$ is a subgraph of $G$ and $G$ does not contain $K_{C\log(f(m)),C\log (m/f(m))}$ or $\overline {K_{C\log f(m),C\log (m/f(m))}}$ as subgraph, which gives the desired contradiction.
\end{proof}

It will be convenient to deduce Theorem \ref{theorem 1.4} from the following Lemma:
\begin{lemma} \label{lemma 4.8}
Let $G(X,Y)$ be an $C$-$Bipartite$-$Ramsey$ graph with $|X|=f(m)$ and $|Y|=m/f(m)$, such that there exists $\alpha >0$ for which $m^{\alpha} \leq f(m) \leq \sqrt m$. Then there exist $c=c(C,\alpha)>0$ such that for any $l$ with $cm \leq l \leq 2cm$, there exist disjoint subsets $U\subseteq X\cup Y$ and $W \subseteq Y$, for which $|e(U)-l|=O(\frac {m}{\sqrt {f(m)}})$ and $|W|= O(m/f(m)^{\frac {3}{2}})$. Also, $|\{e(U\cup W') : W' \subseteq W\}| = \Omega (\frac {m}{\sqrt{f(m)}})$.
\end {lemma}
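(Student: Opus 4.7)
The plan is to adapt the Kwan--Sudakov construction from \cite{l2} to the bipartite setting, using Lemmas \ref{lemma 4.7} and \ref{lemma 4.6} as the source of quasi-randomness and exploiting the new third parameter in our richness definition to handle the imbalance $|Y|\gg|X|$. First, I would construct $U$ probabilistically so that $e(U)$ is concentrated near $l$: include each vertex of $X$ independently with some probability $p_X$ and each vertex of $Y$ independently with some probability $p_Y$, calibrated so that $\mathbb{E}[e(U)]$ is within $O(m/\sqrt{f(m)})$ of $l$. A Chebyshev-type second-moment calculation---using that each vertex of $X$ contributes at most $|Y|=m/f(m)$ edges and that $f(m)\geq m^{\alpha}$---then gives $|e(U)-\mathbb{E}[e(U)]|=O(m/\sqrt{f(m)})$ with positive probability, and the same random choice simultaneously makes the $U$-degrees of most vertices in $Y\setminus U$ concentrated around $p_X\cdot d(v)$.

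Next, I would extract the structured set $W\subseteq Y\setminus U$. Lemma \ref{lemma 4.7} supplies $(\gamma,\delta,4\gamma)$-bipartite-richness for any prescribed $\delta>0$, and Lemma \ref{lemma 4.6} upgrades this to bipartite-diversity both for individual vertices and for pairs. Conditioning on a ``good'' $U$ from the previous step, a counting and pigeonhole argument should then produce $W\subseteq Y\setminus U$ of the prescribed size $O(m/f(m)^{3/2})$ admitting a partition $W=S\sqcup T\sqcup Z$ with the following structure: $|S|,|T|=\Theta(|W|)$ with typical $U$-degrees in $S$ exceeding those in $T$ by a fixed offset $s=\Theta(f(m))$, while $Z$ has size $\Theta(\sqrt{f(m)})$ and contains vertices whose $U$-degrees realise every integer in $\{0,1,\ldots,s\}$ as a subset sum.

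Finally, since $W\subseteq Y$ has no internal edges, for every $W'\subseteq W$ we have $e(U\cup W')=e(U)+\sum_{v\in W'}|N(v)\cap U\cap X|$. Taking $W'_k$ to consist of $k$ vertices of $S$ and $|S|-k$ vertices of $T$ and varying $k$ shifts $e(U\cup W'_k)$ by integer multiples of $s$, hitting $\Omega(|S|)$ distinct values, while adjoining a subset $Z'\subseteq Z$ interpolates all integers between consecutive multiples of $s$. Since $|S|\cdot s=\Theta(m/\sqrt{f(m)})$, this delivers the required $\Omega(m/\sqrt{f(m)})$ distinct values in $\{e(U\cup W'):W'\subseteq W\}$.

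The main obstacle is the middle step: matching the quantitative thresholds so that $|S|,|T|,|Z|$, the spacing $s$, and the tolerance implicit in ``typical $U$-degree'' are all simultaneously compatible, with $|S|\cdot s=\Omega(m/\sqrt{f(m)})$ while $|W|$ stays $O(m/f(m)^{3/2})$. Here the new third parameter $\varepsilon$ in $(\gamma,\delta,\varepsilon)$-bipartite-richness is essential, since it decouples $\delta$ (which governs the attainable size of $W$) from $\varepsilon$ (which governs the spacing and degree diversity in $W$) in a way that the two-parameter richness of \cite{l2} does not permit.
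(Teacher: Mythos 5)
Your overall architecture (random $U$, a structured $W\subseteq Y$ with no internal edges, a decomposition $W=S\cup T\cup Z$, and counting values of $e(U)+\sum_{v\in W'}d_U(v)$) is the same as the paper's, but the specific structure you demand of $S$, $T$, $Z$ in the middle step is not attainable, and this is a genuine gap rather than a technicality. First, you ask for two sets $S,T$ of size $\Theta(|W|)$ whose typical $U$-degrees differ by an offset $s=\Theta(f(m))$. No diversity or richness statement yields this, and it is simply false for the archetypal $C$-Bipartite-Ramsey graph: in a random bipartite graph with edge probability $1/2$, all degrees into $U$ concentrate in a window of width $O(\sqrt{f(m)\log m})$ around $p\,f(m)/2$, so no two large sets of vertices have degrees separated by $\Theta(f(m))$. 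Indeed, the concentration you yourself invoke (degrees close to $p_X d(v)$) caps any usable separation at $O(\sqrt{f(m)})$. Second, you require $Z$ of size $\Theta(\sqrt{f(m)})$ whose $U$-degrees realise \emph{every} integer in $\{0,1,\dots,s\}$ as a subset sum; this needs vertices of very small prescribed $U$-degree (a near-complete sequence such as $1,2,3,\dots$), and nothing in Lemmas \ref{lemma 4.7} or \ref{lemma 4.6} produces even a single vertex of $U$-degree $o(f(m))$ --- in fact density and richness push all degrees to $\Theta(f(m))$.

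The paper's proof (via Lemma \ref{lemma 4.9}) gets the count $\Omega(m/\sqrt{f(m)})$ from three scales instead of your two. The coarse scale $\Theta(f(m))$ comes not from a degree offset but from adding one more whole vertex of $S$ (each contributes $d+O(\sqrt{f(m)})$ with $d=\Theta(f(m))$); the medium scale $\Theta(\sqrt{f(m)})$ comes from swapping an $S$-element for a $T$-element, where the guaranteed separation between $S$- and $T$-degrees is only $\Omega(\sqrt{f(m)})$ --- the most concentration allows --- and $|S|,|T|$ have \emph{different} orders, $\Omega(m/f(m)^{3/2})$ and $\Omega(\sqrt{f(m)})$, giving $\Omega(m/f(m))$ well-separated values $e(U_{k,i})$; the fine scale comes from $Z$, which is only required to have \emph{pairwise distinct} degrees into $U$ (obtained from the anticoncentration estimate $\mathbb{P}(d_U(\boldsymbol{x})=d_U(\boldsymbol{y}))=O(1/\sqrt{f(m)})$ plus Tur\'an), and only \emph{single} vertices of $Z$ are adjoined, after thinning the value set so consecutive values of $e(U_{k,i})$ are at least $2Q\sqrt{f(m)}$ apart to rule out collisions. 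Multiplying the scales, $\Omega(m/f(m))\cdot\Omega(\sqrt{f(m)})=\Omega(m/\sqrt{f(m)})$. To repair your argument you would have to replace the $\Theta(f(m))$ offset and the subset-sum completeness of $Z$ by this weaker, actually provable structure; as written, the middle step you flag as ``the main obstacle'' cannot be carried out.
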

Given Lemma \ref{lemma 4.8}, the proof of our theorem is easy, since each degree of a vertex in $W$ is at most $f(m)$, so $|e(U\cup W) - e(U)| = O(\frac {m}{\sqrt{f(m)}})$. This means that $\Omega (\frac {m}{\sqrt{f(m)}})$ values in the multiplication table are within $O(\frac {m}{\sqrt {f(m)}})$ centred at $e(U)$ , which means that since $|e(U)-l|=O(\frac {m}{\sqrt{f(m)}})$, we can pick $\Omega (\sqrt{f(m)})$ values of $l$, since $e(G) = \Theta (m)$ by Proposition \ref{proposition 4.4},  to get $\Omega (m)$ different values in the multiplication table. \qed

To prove Lemma \ref{lemma 4.8}, we will need to construct an elaborate construction by first randomly generating the set $U$ such that for a given $l$ such that $cm \leq l \leq 2cm$, one has $|e(U)-l|=O(\frac {m}{\sqrt {f(m)}})$ and secondly, for each such $U$, we will need to construct the set $W$of size at most $O(m/f(m)^{\frac {3}{2}})$ such that $|\{e(U\cup W') : W' \subseteq W\}| = \Omega (\frac {m}{\sqrt{f(m)}})$.

To start the construction, we will need the following lemma:
\begin{lemma} \label{lemma 4.9}
For a $C$-$Bipartite$-$Ramsey$ graph $G$ and any $l$ with $cm \le l \le 2cm$ , there exist sets $U, S, T, Z$ satisfying the following claims:
\begin{enumerate}
\item $|e(U)-4l|=O(\frac {m} {\sqrt{f(m)}})$ and $U$ is an ordinary set of vertices. 
\item $S, T, Z$ are disjoints sets of vertices,  or disjoint sets of pairs of vertices, also disjoint from $U$. Also, $S\cup T \cup Z = O(m/f(m)^{\frac {3}{2}})$ 
\item $\forall \boldsymbol {v} \in S\cup T \cup Z, d_U(\boldsymbol {v}) = d + O(\sqrt{f(m)})$, where $d=\Theta (f(m))$. 
\item $\min_{\boldsymbol {x} \in S} d_U(\boldsymbol {x}) - \max_{\boldsymbol {x} \in T} d_U(\boldsymbol {x}) = \Omega (\sqrt {f(m)})$.
\item The degrees from $Z$ into $U$ are distinct $($that is $\forall \{\boldsymbol {z_1}, \boldsymbol{z_2}\} \in Z$, $d_U(\boldsymbol{z_1}) \neq d_U(\boldsymbol {z_2})$ if $\boldsymbol{z_1} \neq \boldsymbol{z_2})$.
\item $|Z| = \Omega (\sqrt{f(m)}),|S| =  \Omega (\sqrt {f(m)}),|T| = \Omega (m/f(m)^{\frac {3}{2}})$ or $|S| = \Omega(\sqrt {f(m)}), |T| = \Omega (m/f(m)^{\frac {3}{2}})$.
\end{enumerate} 
\end{lemma}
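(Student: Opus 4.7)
The plan is to imitate the Kwan--Sudakov construction, adapted to the bipartite imbalanced setting, in three phases: a random sample producing $U$ with well-controlled $e(U)$; a pigeon-hole step extracting a large candidate set $W_0 \subseteq Y\setminus U$ of vertices with nearly uniform degree into $U$; and a careful splitting of $W_0$ and nearby bins into $S$, $T$, $Z$ using the diversity and richness results from Lemmas \ref{lemma 4.6} and \ref{lemma 4.7}.

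For the first phase, I would include each vertex of $X\cup Y$ in $U$ independently with probability $p$, chosen so that $\mathbb{E}[e(U)]=p^2 e(G)=4l$; this is possible because $e(G)=\Theta(m)$ by Proposition \ref{proposition 4.4} and $4l=\Theta(m)$, so $p$ can be taken as a constant. Two distinct edges have nonzero covariance only when they share an endpoint, so
\[
\mathrm{Var}(e(U))=O\!\left(\sum_v d(v)^2\right)\leq e(G)\bigl(\max_{x\in X}d(x)+\max_{y\in Y}d(y)\bigr)\leq m\bigl(m/f(m)+f(m)\bigr)=O\!\left(m^2/f(m)\right),
\]
using $f(m)\leq\sqrt m$. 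Chebyshev then delivers $|e(U)-4l|=O(m/\sqrt{f(m)})$ with constant probability, yielding condition (1).

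For the second phase, condition on such a $U$. Each $y\in Y\setminus U$ has $d_U(y)$ equal to a sum of independent Bernoullis with mean $p\,d_X(y)$; by Chernoff plus a Markov union bound, a linear fraction of $y\in Y$ satisfy $|d_U(y)-p\,d_X(y)|=O(\sqrt{f(m)})$. Proposition \ref{proposition 4.4} further guarantees that a linear fraction of $y$ have $d_X(y)=\Theta(f(m))$, so a positive proportion of $y\in Y$ have $d_U(y)=\Theta(f(m))$. Partitioning these ``good'' vertices into $O(\sqrt{f(m)})$ bins of $d_U$-width $c_1\sqrt{f(m)}$ in the range $[0,f(m)]$, pigeon-hole produces a bin $W_0$ with $|W_0|=\Omega(m/f(m)^{3/2})$, which will serve as $T$ and verifies condition (3) and the $T$-size of (6). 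Since $|W_0|/|Y|=O(1/\sqrt{f(m)})=o(1)$, the remaining good vertices occupy the other $O(\sqrt{f(m)})$ bins with an average of $\Omega(m/f(m)^{3/2})$ vertices per bin, so at least one bin at $d_U$-distance $\geq 2c_1\sqrt{f(m)}$ from $W_0$ contains $\Omega(\sqrt{f(m)})$ vertices; taking this as $S$ verifies condition (4) and the $S$-size of (6).

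For the third phase, I extract $Z$ with pairwise-distinct $d_U$-values by combining diversity with an anti-concentration estimate. By Lemma \ref{lemma 4.6}(1), most pairs $y_1,y_2\in Y$ satisfy $|N(y_1)\triangle N(y_2)|\geq c f(m)$, whence $d_U(y_1)-d_U(y_2)$ is a signed sum of $\Omega(f(m))$ independent Bernoullis; Berry--Esseen gives $\mathbb{P}_U[d_U(y_1)=d_U(y_2)]=O(1/\sqrt{f(m)})$. Writing $n_i$ for the multiplicity of value $i$ in the $d_U$-histogram on the window around $d$, the expected number of coincident pairs is $O(|W_0|^2/\sqrt{f(m)})$, so $\sum_i n_i^2=O(|W_0|^2/\sqrt{f(m)})$. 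Cauchy--Schwarz then forces $\#\{\text{distinct } d_U\text{-values on the window}\}=\Omega(\sqrt{f(m)})$; choosing one representative per value yields $Z$ with the required distinctness and size, verifying (5) and the $Z$-size of (6). Disjointness of $S,T,Z$ is preserved by removing the $O(\sqrt{f(m)})=o(|T|)$ representatives from $T$.

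The main obstacle I anticipate is the anti-concentration step in the third phase: producing $\Omega(\sqrt{f(m)})$ distinct $d_U$-values within a window of width $O(\sqrt{f(m)})$ is a tight estimate and requires both the diversity lemma and a precise Berry--Esseen bound. If the single-vertex $Z$ falls short for slowly growing $f(m)$, I would fall back on the pair construction, replacing single vertices by disjoint pairs in $Y$, invoking Lemma \ref{lemma 4.6}(2) for pair diversity and using the second option in condition (6). The extra flexibility of summing two degrees roughly doubles the set of achievable values and keeps the argument going.
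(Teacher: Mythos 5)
Your overall toolkit (random $U$ with constant $p$, Chebyshev for $e(U)$, pigeonhole on degree windows, anti-concentration plus a Tur\'an/Cauchy--Schwarz count for distinct degrees) is the right one, but the order in which you fix objects creates a genuine gap. You condition on an outcome of $U$, extract $W_0$ from the resulting $d_U$-histogram, and then in phase three apply probabilistic estimates over $U$ ("the expected number of coincident pairs in $W_0$ is $O(|W_0|^2/\sqrt{f(m)})$") to a set $W_0$ that was itself chosen as a function of $U$; this is circular. If you instead take the expectation over all pairs of $Y$ (the only set available before sampling), the coincident-pair bound becomes $O(|Y|^2/\sqrt{f(m)})$, and Cauchy--Schwarz then yields only $\Omega\bigl(|W_0|^2\sqrt{f(m)}/|Y|^2\bigr)=\Omega(1/\sqrt{f(m)})$ distinct values --- useless. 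This is exactly why the paper fixes the candidate family \emph{before} sampling $U$: it pigeonholes on degrees in $G$ (in fact on degree sums of pairs), prunes non-diverse pairs using Lemma \ref{lemma 4.6} and Tur\'an (max degree $|Y|^{\delta}$), obtains a set $A$ of size $\Omega(m/f(m)^{3/2})$ with pairwise $\Omega(f(m))$ neighbourhood differences, and only then samples $U$ and verifies all five probabilistic claims simultaneously. Your count also ignores the non-diverse pairs: each vertex has up to $|Y|^{\delta}$ partners for which no anti-concentration holds, contributing $|W_0||Y|^{\delta}$ possible coincidences, which swamps $|W_0|^2/\sqrt{f(m)}$ when $f(m)$ is near $\sqrt m$; pruning them first is what forces the paper to work with stars/matchings of \emph{pairs} of vertices (the "disjoint sets of pairs" clause and the second alternative in condition (6)), a step your plan does not confront.

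The second gap is the $S$--$T$ construction. Pigeonhole gives a heavy bin $W_0$, but nothing guarantees a second populated bin whose $d_U$-distance from $W_0$ is simultaneously at least $2c_1\sqrt{f(m)}$ (for condition (4)) and at most $O(\sqrt{f(m)})$ (for condition (3), which demands a single $d$ with all of $S\cup T\cup Z$ in $d+O(\sqrt{f(m)})$): all good vertices could lie in $W_0$ and its adjacent bins, or the only other populated bins could sit at distance $\Theta(f(m))$. Your supporting claims that $|W_0|/|Y|=O(1/\sqrt{f(m)})$ and that the remaining bins average $\Omega(m/f(m)^{3/2})$ vertices are unjustified (the histogram can be arbitrarily skewed). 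The paper sidesteps this entirely: condition (3) comes from pigeonholing on degrees in $G$ into a window of width $O(\sqrt{f(m)})$ and transferring it to $d_U$ by per-vertex concentration, while the gap in condition (4) comes from first extracting a set $B$ of $\Omega(\sqrt{f(m)})$ elements with pairwise \emph{distinct} $d_U$-values inside that window, ordering them, splitting into an upper and lower part (distinct integers force a gap of $\Omega(|B|)=\Omega(\sqrt{f(m)})$), and enlarging whichever part has many equal-degree companions to size $\Omega(m/f(m)^{3/2})$ --- which is also what produces the either/or size dichotomy in condition (6). You would need to rebuild your phase two along these lines (or supply a separate anti-concentration argument that the $d_U$-values of a pre-fixed candidate set spread over, but not beyond, a window of width $\Theta(\sqrt{f(m)})$) before the lemma is proved.
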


\begin{proof}
We prove them in a certain order. Firstly, from Lemma \ref{lemma 4.7}, there exists $\gamma = \gamma (C, \alpha, \delta)$ such that for $n$ large enough, $G$ is $(\gamma, \delta, 4\gamma)$-bipartite-rich, with $\delta = \alpha/5$.
Hence, from Lemma \ref{lemma 4.6}, our graph $G$ is $(\delta, 2\gamma)$-bipartite-diverse and $(4\gamma^{2}, \delta, 2\gamma)_2$-bipartite-diverse. Now consider the $\Omega (m^2/ f(m)^2)$ sums $d(y_1) + d(y_2)$ for $\{y_1, y_2\} \in \binom {Y}{2}$, all of which lie in the range $[0, 2f(m)]$. By Pigeonhole principle, there exists some $d'=\Theta (f(m))$ such that there exists $W\subset \binom {Y}{2}$ of size $|W| = \Omega (m^2 / f(m)^{\frac {5}{2}})$ such that for all pairs $\{w_1, w_2\} \in W$, we have $d(w_1) + d(w_2) = d' + O({\sqrt {f(m)}})$. By Lemma \ref{lemma 4.7}, we can delete at most $|Y|^{1+\delta}= {(\frac{m}{f(m)})^{1+\delta}} =o(m^2/f(m)^{\frac {5}{2}})$ pairs of $\{y_1, y_2\} \in \binom {Y}{2}$ from $W$ to obtain $W' \subseteq W$, where $W' = \Omega (m^2/f(m)^{\frac {5}{2}})$ and for $\{w_1, w_2\} \in W$ we have $|N(w_1)\cap \overline {N(w_2)}| < 2\gamma|X|$. Note that $|Y|^{1+\delta}= {(\frac{m}{f(m)})^{1+\delta}} =o(m^2/f(m)^{\frac {5}{2}})$ since $f(m) \leq \sqrt {m}$ and $\delta \leq \frac {1}{10}$. Now interpret $W'$ as a graph on the vertex set $Y$ and hence have that either there exists a vertex with degree $\Omega (m/f(m)^{\frac {5}{4}})$ or there exists a matching of size $\Omega (m/f(m)^{\frac {5}{4}})$. In the former case, let this vertex be denoted by $y\in Y$. Then, set $d''=d' - d_G(y)$ and let $L$ be the set of neighbours of $y$ in $W'$. In the latter case, let $L$ be the set of pairs comprising the matching and $d'' = d'$. Then in both cases $|L| = \Omega (m/f(m)^{\frac {5}{4}})$ and $\forall \boldsymbol {y} \in L, d(\boldsymbol {y}) = d'' + O(\sqrt {f(m)})$. Next, let $F \subseteq \binom {L}{2}$ be the set of $\{\boldsymbol {y_1}, \boldsymbol {y_2}\} \in \binom {L}{2}$ such that $|N(\boldsymbol {y_1}) \triangle N(\boldsymbol {y_2})| < 4\gamma^{2}|X|$ (note that we can assume $\gamma < \frac {1}{4}$). Then, in each case using a different diversity assumption, if we interpret $L$ as a graph with edges given by $F$, the graph has maximal degree $|Y|^{\delta}$ and so using Turan's Theorem (Corollary \ref{turan1corollary}), our graph has an independent set $A$ of size $\Omega (\frac {|L|}{|Y|^{\delta}}) = \Omega (\frac {m}{f(m)^{5/4}}\frac {f(m)^{\delta}}{m^{\delta}}) = \Omega (m/f(m)^{\frac {3}{2}})$, for all $\delta \leq \frac {\alpha}{4(1-\alpha)}\leq \frac {\alpha}{5}$. Hence in the original graph, for every $\{ \boldsymbol {y_1}, \boldsymbol {y_2}\} \in \binom {A}{2}$, $|N(\boldsymbol {y_1}) \triangle N(\boldsymbol {y_2})|=|\Omega(f(m))|$ . 

Let $c=c(C,\alpha)>0$ be such that $e(G) \geq 800cm$ and let $cm \leq l \leq 2cm$, while let $p=\sqrt {\dfrac{4l}{e(G)}} \in (0, 0.1)$ be the probability of picking a vertex in $X\cup Y$ independently to be in $U$.
    
    \begin{claim}
    The following each hold with probability greater than $0.8$.
    \begin{enumerate}
        \item $|e(U)-4l|=O(\frac {m}{\sqrt {f(m)}})$;
        \item There is Q $\subseteq A$ involving no vertices of $U$ with $|Q|\geq \dfrac{2}{3}|A|$;
        \item There is $R\subseteq A$ with $|R|\geq \dfrac{2}{3}|A|$ and $d_{U}(\boldsymbol{x})=pd''+O(\sqrt {f(m)})$ for each $\boldsymbol{x}\in R$;
        \item $|N_{U}(\boldsymbol{x})\triangle N_{U}(\boldsymbol{y})|=\Omega(f(m))$ for each $\{\boldsymbol{x},\boldsymbol{y}\}\in\binom{A}{2}$;
        \item The equality $d_{U}(\boldsymbol{x})=d_{U}(\boldsymbol{y})$ holds for $O(m^2/f(m)^{\frac {7}{2}})$ pairs $\{\boldsymbol{x},\boldsymbol{y}\}\in\binom{A}{2}$
    \end{enumerate}
    \end{claim}
    \begin{proof}
    \begin{enumerate}
    \item Note that $\mathbb{E}(e(U))=4l$ and Var$(e(U))=O(m^2/f(m))$, as each edge shares endpoints with at most $O(m/f(m))$ other edges. By Chebyshev's inequality, $|e(U)-4l|=O(\frac {m}{\sqrt {f(m)}})$ with probability at least $0.99$ for large enough implied constant in the $O(\frac{m}{\sqrt {f(m)}})$ term.

    \item This follows directly from our definition of picking $U$: $\mathbb{E}(|\{r\in A, r\notin U\}|)\geq(1-p)^{2}|A|$, variance is $O(|A|)$. Recall that $(1-p)^2\geq 0.81$, the result follows from Chebyshev's inequality.
    
    \item For each $\boldsymbol{x}\in A$ we have $\mathbb{E}d_U(\boldsymbol{x})=pd''+O(\sqrt{f(m)})$ and the variance is $O(f(m))$, by Chebyshev with probability at least 0.999 we have $d_U(\boldsymbol{x})=pd''+O(\sqrt {f(m)})$. Let $R$ be the set of $\boldsymbol{x}$ satisfying the condition as in the claim, using Markov's inequality we acquire the result. Now set $d=pd''$ and have that $d=\Theta (f(m))$.
    \item Recall that $|N(\boldsymbol{x})\triangle N(\boldsymbol{y})|=\Omega(f(m))$. Since $|N_{U}(\boldsymbol{x})\triangle N_{U}(\boldsymbol{y})|=|(N(\boldsymbol{x})\triangle N(\boldsymbol{y}))\cap U |$, we know that $|N_{U}(\boldsymbol{x})\triangle N_{U}(\boldsymbol{y})|$ has a binomial distribution with parameters $|N(\boldsymbol{x})\triangle N(\boldsymbol{y})|$ and $p$. By Chernoff bound, $\mathbb{P}(|N_{U}(\boldsymbol{x})\triangle N_{U}(\boldsymbol{y}|<(p/2)|N(\boldsymbol{x})\triangle N(\boldsymbol{y}|)=e^{-\Omega(f(m))}\rightarrow 0$ as $f(m)\rightarrow \infty$, and the desired result follows directly.
    
    \item Since $|N(\boldsymbol{x})\triangle N(\boldsymbol{y})|=\Omega(f(m))$, we let $|N(\boldsymbol{x})\setminus N(\boldsymbol{y})|=a,$ $|N(\boldsymbol{y})\setminus N(\boldsymbol{x})|=b$. Without loss of generality assume that $a\leq b$ and note that $a+b\geq c'f(m)$, for some $c' >0$, such that $b\geq \dfrac{c'f(m)}{2}$. Then, 
    \begin{align*}
        \mathbb{P}(d_{U}(\boldsymbol{x})=d_{U}(\boldsymbol{y}))&=\mathbb{P}(|N_{U}(\boldsymbol{x})/ N_{U}(\boldsymbol{y})|=|N_{U}(\boldsymbol{y})/ N_{U}(\boldsymbol{x})|)\\&=\sum_{k=1}^a p^k(1-p)^{a-k}\binom{a}{k}p^k(1-p)^{b-k}\binom{b}{k}\\&\leq \max_{i\in \{1,2,...,b\}} p^i(1-p)^{b-i}\binom{b}{i}\\&=O\left(p^{pb+\lambda}(1-p)^{b-pb - \lambda }\binom{b}{pb+\lambda}\right)
    \end{align*}
    for some $\lambda \in (-1, 1)$, since the mode of a binomial distribution with parameters $b$ and $p$ is $\lfloor {pb + p} \rfloor$ or $\lceil{pb +p} \rceil - 1$, hence it can be expressed as $pb + \lambda$. Then, $O(p^{pb+\lambda}(1-p)^{b-pb - \lambda }\binom{b}{pb+\lambda})=O(\frac{1}{\sqrt{p(1-p)b}})=O(1/\sqrt {f(m)})$ by applying Stirling's formula. Now, the expected number of pairs $\boldsymbol{x}, \boldsymbol{y} \in \binom {A}{2}$ such that $d_U(\boldsymbol{x})=d_U(\boldsymbol{y})$ is $O(m^2/f(m)^{\frac {7}{2}})$ and the desired result follows from Markov's inequality.
    \end{enumerate}
    \end{proof}
 Now fix  an outcome of $U$ satisfying all 5 of the above properties, arbitrarily divide $R\cap Q$ which has size at least $\dfrac{|A|}{3}$ into two subsets $H$ and $P$ of size $\Omega(m^2/f(m)^{\frac {3}{2}})$. Consider the new graphs on the vertex sets $H$ and $P$, where $\boldsymbol{x}\boldsymbol{y}$ is an edge if and only if $d_{U}(\boldsymbol{x})=d_{U}(\boldsymbol{y})$. In both cases, this new graph has $O(m/f(m)^{\frac {7}{2}})$ edges, so by Turan's theorem (Corollary \ref{turan2corollary}) $H$ has an independent set $B$ and $P$ has an independent set $Z$ both of size $\Omega(\sqrt {f(m)})$. Now order the degrees $d_{U}(\boldsymbol {b})$ for $\boldsymbol{b} \in B$ in an increasing order. If the upper $|B|/2$ elements in ordering of $B$ have $\Omega(m/f(m)^{\frac {3}{2}})$ neighbours in $H$, then we let $T$ be the vertex set of this upper half elements and the vertices in $H$ of the same degrees as them, let $S$ to be the lower $|B|/3$ elements in $B$. If not, then the lower $|B|/2$ elements in $B$ has $\Omega(m/f(m)^{\frac {3}{2}})$ many neighbours in $H$, then we let $S$ be the vertex set of this lower half elements and the vertices in $H$ of the same degrees as them, let $T$ to be the upper $|B|/3$ elements in $B$. In either case, we will get $T,S$ of size $\{\Omega (m/f(m)^{\frac {3}{2}}),\Omega(\sqrt {f(m)})\}$ (in some order), and $\min_{\boldsymbol {x}\in T}d(\boldsymbol{x})-\max_{\boldsymbol {x}\in S}d(\boldsymbol{x})\geq |B|/6=\Omega(\sqrt{f(m)})$, as required.

 \end{proof}
 
 Given the construction of Lemma \ref{lemma 4.9}, we make use of it to define the following. Without loss of generality, assume that $|S|=\Omega(m/f(m)^{\frac {3}{2}})$ and $|T|=\Omega (\sqrt {f(m)})$ and fix the orderings of their elements. Now let $c'' > 0$ be such that
 \begin{align*}
 \min_{\boldsymbol {y} \in T} d_{U}(\boldsymbol{y}) - \max_{\boldsymbol{y}\in S} d_{U}(\boldsymbol{y}) \geq 8c''\sqrt {f(m)},\\ |T|\geq c''\sqrt{f(m)} \textit { and } |S| \geq  2c''m/f(m)^{\frac {3}{2}}
 \end{align*}
 Then, let $\mathcal{P}$ be the set of pairs $(k,i)\in \mathbb{Z}^2$ such that $c'm/f(m)^{\frac {3}{2}} \leq k \leq 2c'm/f(m)^{\frac {3}{2}}$ and $0 \leq i \leq c'\sqrt {f(m)}$. Then, for each $(k,i)\in \mathcal {P}$, define $Q_{k,i}$ as the union of the first $(k-i)$ elements of $S$ and the first $i$ elements of $T$. For ease of notation, we will let $U_{k,i}=U\cup Q_{k,i}$.
 
 We already have that for all $k$ and $i$ in the appropriate specified range, where all of the implied constants are positive:
 \begin{align}
    e(U \cup Q_{k, 0}) - e(U \cup Q_{k-1, 0}) = \Theta (f(m))\\
    e(U \cup Q_{k,i}) - e(U \cup Q_{k, i-1}) = \Theta (\sqrt {f(m)})
\end{align}
The statement in $(1)$ trivially comes from the fact that for all $\boldsymbol {y} \in S$, $d_{U}(\boldsymbol {y}) = \Theta (f(m))$. For $(2)$ we use that $\min_{\boldsymbol {y} \in T} d_{U}(\boldsymbol{y}) - \max_{\boldsymbol{y}\in S} d_{U}(\boldsymbol{y}) = \Theta (\sqrt {f(m)})$ and observe that $e(Q_{k,i}) = 0$ for all pairs $(k, i) \in \mathcal{P}$, since $G$ is bipartite and $S, T$ only contain vertices of $Y$.
Now letting $k$ and $i$ range through $\mathcal {P}$, we already have $\Omega (|S|\sqrt{f(m)}) = \Omega (\frac {m}{f(m)})$ unique values in $\Phi(G)$. To prove Theorem \ref{theorem 1.4}, we will need to improve this number to $\Omega(\frac {m}{\sqrt{f(m)}})$, as remarked before.

Therefore, the final building block of the proof of Lemma \ref{lemma 4.8} will be to show that we can induce $\Omega(\sqrt{f(m)})$ more subgraphs in magnitude, which are of different sizes for each $U$ and $W$. This will be done by separating out the values of $e(U_{k,i})$ by at least $Q\sqrt{f(m)}$, for $Q$ being a sufficiently large constant and using that $Z$ is a set of $\Omega(\sqrt{f(m)})$ vertices of different degrees into $U$, which will allow us to achieve the exact improvement.

\begin {proof}[Proof of Lemma \ref{lemma 4.8}]
    First, let $\mathcal{P'} \subseteq \mathcal {P}$, where $(k, i) \in \mathcal{P}$ is in $\mathcal{P'}$ if $e(U_{k+1, 0}) - e(U_{k, i}) = \Omega (\sqrt{f(m)})$ for some positive implied constant. Now by $(1)$ and $(2)$, we have that for each $k$ in the following range $[c'm/f(m)^{\frac {3}{2}}, 2c'm/f(m)^{\frac {3}{2}}-1]$, there are $\Omega(\sqrt{f(m)})$ such $i$ that $(k, i) \in \mathcal {P'}$ and so $|\mathcal{P'}| = \Omega (m/f(m))$. Now, let $\mathcal {I} = \{e(U_{k,i}) : (k,i) \in \mathcal{P'}\}$ to have that $|\mathcal{I}| = \Omega (m/f(m))$ and each pair of elements of $\mathcal{I}$ are separated by at least $\Omega (\sqrt{f(m)})$. Now let $Q>0$ be a constant such that for every $\boldsymbol{x} \in W$, $d_U(\boldsymbol{x}) \in [d - Q\sqrt{f(m)}, d + Q\sqrt{f(m)}]$. Then, order $\mathcal{I}$ in an increasing way and form $\mathcal{I'} \subseteq \mathcal{I}$ by picking every $q$th element in $\mathcal{I}$ for some constant $q=q(Q)>0$, so that every pair of elements in $\mathcal{I'}$ are separated by $2Q\sqrt{f(m)}$. Note that we still have that $|\mathcal{I'}| = \Omega (m/f(m))$ because the gaps between consecutive elements of $\mathcal{I}$ are $\Omega(\sqrt{f(m)})$. Now for each $\boldsymbol{z} \in Z$, $d_{U_{k,i}}(\boldsymbol{z}) = d_U(\boldsymbol{z})$, as $\{\boldsymbol {z}\}, Q_{k,i} \subset Y$ and $G$ is bipartite. Hence, for each $(k, i) \in \mathcal {P'}$ and each $\boldsymbol {z} \in Z$, $e(U_{k,i} \cup \boldsymbol{z})$ are distinct, which means that we've constructed $\Omega(m/\sqrt{f(m)})$ induced subgraphs of different sizes, as required.
\end {proof}

\section{Conclusion}

We have given a proof of a weaker version of Conjecture \ref{conjecture 1.4} in the form of Theorem \ref{theorem 1.4}, and we suspect that Conjecture \ref{Conjecture 1.1} is indeed true, and if so the proof would involve a combination of graph theoretic and number theoretic results in a major way. A good starting point would be to prove that the density is within $(\varepsilon,1-\varepsilon)$ for some $\varepsilon>0$ for all sub-polynomial $C$-$Bipartite$-$Ramsey$ graphs.

Although our treatment of $C$-$Bipartite$-$Ramsey$ graphs gives evidence to Conjecture \ref{Conjecture 1.1}, there is still a long way to go in order to prove it. To illustrate this, we give a somewhat extremal example implied by Conjecture \ref{Conjecture 1.1}.
\begin {conjecture} \label {conjecture} For any $n \in \mathbb{N}$, $\mathcal{M}(K_{n^2, (n+1)^2}) \geq \mathcal {M}(K_{n(n+1), n(n+1)})$
\end {conjecture}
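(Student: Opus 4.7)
Abbreviate $A = n^2$, $B = (n+1)^2$, $C = n(n+1)$, so that $AB = C^2$ and both graphs have exactly $C^2$ edges. For any non-negative integer $N$, one has $N \in \mathcal{M}(K_{A,B})$ iff $N=0$ or $N$ has a divisor $d$ with $N/B \le d \le A$, and $N \in \mathcal{M}(K_{C,C})$ iff $N=0$ or $N$ has a divisor in $[\sqrt{N}, C]$. The plan is to prove the conjecture by constructing an injection
\[
\phi : \mathcal{M}(K_{C,C}) \setminus \mathcal{M}(K_{A,B}) \hookrightarrow \mathcal{M}(K_{A,B}) \setminus \mathcal{M}(K_{C,C}),
\]
which immediately yields $|\mathcal{M}(K_{A,B})| \ge |\mathcal{M}(K_{C,C})|$.

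The first step is to bound the source. If $N \in \mathcal{M}(K_{C,C}) \setminus \mathcal{M}(K_{A,B})$ with witnessing factorisation $N = c_1 c_2$, $c_1 \le c_2 \le C$, then necessarily $c_1 > A$; otherwise $c_1 \le A$ together with $c_2 \le C < B$ would already place $N \in \mathcal{M}(K_{A,B})$. Hence $(c_1, c_2)$ is an unordered pair from $(A, C] = \{n^2+1, \ldots, n^2+n\}$, a set of only $n$ integers, so $|\mathcal{M}(K_{C,C}) \setminus \mathcal{M}(K_{A,B})| \le \binom{n+1}{2}$. The second step is to supply at least that many elements in the target. I would use primes in $(C, B]$: for any prime $p$ with $C < p \le B$ and any $1 \le x \le A$, the product $N := xp$ lies in $\mathcal{M}(K_{A,B})$ via the pair $(x, p)$. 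Because $p > A \ge x$ we have $\gcd(x, p) = 1$, so every divisor of $N$ is either a divisor $d \mid x$ (satisfying $d \le x < \sqrt{xp} = \sqrt{N}$, the last inequality since $x < p$) or of the form $dp$ (satisfying $dp \ge p > C$). In either case no divisor of $N$ lies in $[\sqrt{N}, C]$, so $N \notin \mathcal{M}(K_{C,C})$. The products $\{xp : 1 \le x \le A\}$ are pairwise distinct, and remain distinct across different primes in $(C, B]$; since $A = n^2 \ge n(n+1)/2 = \binom{n+1}{2}$ for all $n \ge 1$, a single prime in $(C, B]$ already yields enough targets, and $\phi$ can then be defined as any injection from the source onto a subset of these products.

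The main obstacle is therefore the existence of a prime in $(n^2+n, (n+1)^2]$ for every $n$, a slight weakening of Legendre's conjecture: it is verified within the range of current computation and supported by standard probabilistic heuristics, but is open in general. For large $n$ one would hope to supplement the plan unconditionally by replacing primes with composites $y \in (C, B]$ whose largest prime factor exceeds $C$, since the argument above transfers verbatim, producing sufficiently many such $y$ via sieve methods or Baker--Harman--Pintz-type prime-gap bounds. The cleanest resolution, however, would be a purely combinatorial injection $\phi$ that explicitly maps each product $(n^2+i)(n^2+j)$ to a canonical element of $[1, A] \cdot (C, B]$, thereby bypassing any number-theoretic existence input; constructing such a $\phi$ is the direction I would pursue most aggressively.
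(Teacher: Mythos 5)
The statement you are addressing is left as an \emph{open conjecture} in the paper: the authors prove nothing here, remarking only that it reduces to the inequality $|\mathcal{M}(K_{n^2,(n+1)^2})\setminus\mathcal{M}(K_{n(n+1),n(n+1)})|\ge|\mathcal{M}(K_{n(n+1),n(n+1)})\setminus\mathcal{M}(K_{n^2,(n+1)^2})|$ and that this would likely require a prime in $(n(n+1),(n+1)^2)$, a statement stronger than Legendre's conjecture. Your proposal is exactly this reduction, carried out carefully: the observation that any $N\in\mathcal{M}(K_{C,C})\setminus\mathcal{M}(K_{A,B})$ is a product of two integers in $(A,C]$, hence the source has size at most $\binom{n+1}{2}$, is correct, and, \emph{given} a prime $p\in(C,B]$, the $n^2\ge\binom{n+1}{2}$ numbers $xp$ with $1\le x\le A$ do lie in $\mathcal{M}(K_{A,B})\setminus\mathcal{M}(K_{C,C})$ by your divisor argument. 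But the whole proof is conditional on that prime-existence statement, which is open; so this is not a proof of the conjecture. You also mischaracterize the obstacle: requiring a prime in $(n^2+n,(n+1)^2]$ is a \emph{strengthening} of Legendre's conjecture, not a ``slight weakening,'' since the interval is a subinterval of $(n^2,(n+1)^2)$ and its right endpoint is a square, hence never prime.

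The unconditional rescues you sketch also fail. There are no composites $y\in(C,B]$ whose largest prime factor exceeds $C$: such a $y$ would be divisible by a prime $p>C$ while satisfying $y\le(n+1)^2<2n(n+1)\le 2p$, forcing $y=p$, so that class is empty. Baker--Harman--Pintz-type results give primes only in intervals of length $x^{0.525}$, whereas the interval here has length $n+1\approx x^{1/2}$ around $x\approx n^2$; even the Riemann Hypothesis only yields gaps $O(\sqrt{x}\log x)$, so no known tool, conditional or not, covers this range. Finally, the hoped-for ``purely combinatorial injection'' bypassing number theory is essentially a restatement of the problem rather than a plan; the paper states this conjecture precisely to illustrate that such a route appears out of reach. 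In short: your counting reduction is sound and coincides with the paper's remark, but the key input is an open problem, so a genuine gap remains and the statement stays a conjecture.
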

Clearly, to prove this conjecture, clearly it must be shown that:
\[|\mathcal {M}(K_{n^2, (n+1)^2}) \setminus \mathcal {M}(K_{n(n+1), n(n+1)})|\geq |\mathcal {M}(K_{n(n+1), n(n+1)}) \setminus \mathcal {M}(K_{n^2, (n+1)^2})| \] 
However, to prove the above statement, one would likely require to show existence of a prime number in the range $(n(n+1), (n+1)^2)$, which is stronger than the Legendre's Conjecture. Because of this difficulty even in specific cases, we expect Conjecture \ref{Conjecture 1.1} to be very difficult at present.

$\mathbf{Acknowledgements}$. We would like to thank Dr.\ Aled Walker for his supervision, encouragement and helpful comments.

%
%

\textsc{University of Cambridge, Centre for Mathematical Sciences, Wilberforce Rd, \\Cambridge
CB3 0WA}

\textit{Email address}: \texttt{baksysmantas@gmail.com}

\textsc{University of Cambridge, Centre for Mathematical Sciences, Wilberforce Rd, \\Cambridge
CB3 0WA}

\textit{Email address}:
\texttt{shawnchen177@gmail.com}


\begin{thebibliography}{5}
\bibitem {l1}
B. Narayanan, J. Sahasrabudhe, and I. Tomon, \textit{The multiplication table problem for bipartite graphs,}
{Combinatorica. \textbf{37} (2017), no. 5, 991–1010.}

\bibitem {l2}
M. Kwan and B. Sudakov,
\textit{Ramsey graphs induce subgraphs of quadratically many sizes},
Int. Math. Res. Not. IMRN (2020), no. 6, 1621–1638.

\bibitem {l3}
K. Ford, \textit{The distribution of integers with a divisor in a given interval,} Ann. of Math. (2) \textbf{168} (2008), no. 2, 367–433.


\bibitem{l6}
G. Tenenbaum, \textit{Sur la probabilitè qu’un entier posséde un diviseur dans un intervalle donnè}, Compositio Math. \textbf{51} (1984), 243–263.

\bibitem{l7}
P. Erdős and A. Szemerédi, \textit{On a Ramsey type theorem}, Period. Math. Hungar. \textbf{2} (1972), 295–
299, Collection of articles dedicated to the memory of Alfréd Rényi, I.

\bibitem{l8}
J.H. Hattingh and M.A. Henning,
\textit{Bipartite Ramsey theory,}
Util. Math. \textbf{53} (1998), 217–230.
\bibitem{l9}
P. Erdős and G. Szekeres, 
\textit{A combinatorial problem in geometry,}
Compositio Math. \textbf{2} (1935), 463–470.
\bibitem{l10}
M. Kwan and B. Sudakov, 
\textit{Proof of a conjecture on induced subgraphs of Ramsey graphs,} Trans. Amer. Math. Soc. \textbf{372} (2019) no. 8, 5571–5594.
\bibitem{l11}
P.Turán,
\textit {On a Theorem of Hardy and Ramanujan,}
Journal of the London Mathematical Society \textbf{4} (194), 274-276
\bibitem{l12}
P. Erdős,
\textit {An asymptotic inequality in the theory of numbers,}
Vestnik Leningrad Univ. \textbf{15} (1960), 41–49 (Russian).
\bibitem{l13}
N. Alon and J. H. Spencer,
\textit{The probabilistic method,} fourth ed.,
Wiley Series in Discrete Mathematics and Optimization, John Wiley \& Sons, Inc., Hoboken, NJ, 2016


\end{thebibliography}
\end{document}